\newcommand{\halmos}{{\mbox{\, \vspace{3mm}}} \hfill
\mbox{$\Box$}}
\definecolor{lw}{RGB}{255,0,0}
\definecolor{kb}{RGB}{0,255,0}
\newtheorem{thm}{Theorem}
\newtheorem{prop}[thm]{Proposition}
\newtheorem{lemm}[thm]{Lemma}
\newtheorem{cor}[thm]{Corollary}
\theoremstyle{remark}
\newtheorem{example}[thm]{Example}
\def\R{{\mathbb{R}}}
\def\E{{\mathbb{E}}}
\def\P{{\mathbb{P}}}
\def\im{{\mathrm{i}}}
\def\S{{\mathbb{S}}}
\def\Rd{{{\R}^d}}
\def\sA{{\mathcal{A}}}
\def\e{{\mathrm{e}}}
\def\rd{{\mathrm{d}}}
\def\id{{\mathbf{1}}}
\def\CS{{\mathrm{CS}}}
\DeclareMathOperator*\dist{dist}
\newcommand*{\rmnum}[1]{\expandafter\@slowromancap\romannumeral #1@}
\title{Yaglom limit for stable processes in cones}
\author{Krzysztof Bogdan}
\address{ Faculty of Pure and Applied Mathematics,
Wroc\l aw University of Science and Technology,
Wyb. Wyspia\'nskiego 27, 50-370 Wroc\l aw, Poland}
\email{bogdan@pwr.wroc.pl}
\author{Zbigniew Palmowski}
\address{ Faculty of Pure and Applied Mathematics,
Wroc\l aw University of Science and Technology,
Wyb. Wyspia\'nskiego 27, 50-370 Wroc\l aw, Poland}
\email{zbigniew.palmowski@gmail.com}
\author{Longmin Wang}
\address{School of Mathematical Sciences, Nankai University, Tianjin 300071, P.R. China}
\email{wanglm@nankai.edu.cn}
\thanks{
KB was partially supported by National Science Centre of Poland (NCN)
under grant 2014/14/M/ST1/00600.
ZP
was partially supported
by the
National Science Centre of Poland (NCN)
under grant 2015/17/B/ST1/01102.
LW was partially supported by NSFC under grant 11671216. ZP and LW
also acknowledge partial support by the project RARE -318984, a Marie Curie IRSES Fellowship within the 7th European Community Framework Programme.  }
\date{\today}
\subjclass[2000]{31B05, 60J45, 60F05, 60G51}
\keywords{}
\begin{document}

\begin{abstract}
We  give the asymptotics of the
tail of the distribution of the first exit time of the isotropic $\alpha$-stable L\'evy process
from
the  Lipschitz cone in $\mathbb{R}^d$.
We obtain the Yaglom limit for the killed stable process in the cone.  We construct and estimate
entrance laws for the  process from the vertex into the cone.
For the symmetric Cauchy process
and the positive half-line we give a spectral representation of the Yaglom limit.

Our approach relies on the scalings of the stable process and the
cone, which allow us to express the temporal asymptotics of
the distribution of the process
at infinity by means of
 the spatial asymptotics of harmonic functions of the process at the vertex; on
 the representation of the probability of survival of the process in the cone as a Green potential; and on the approximate factorization of the 
heat kernel of the cone, which secures compactness and yields a limiting (Yaglom) measure by means of Prokhorov's theorem.

\vspace{3mm}

\noindent {\sc Keywords.}  Yaglom limit $\star$ stable process $\star$ Lipschitz cone $\star$ quasi-stationary measure $\star$
Green function $\star$ Martin kernel $\star$ excursions.

\end{abstract}

\maketitle

\pagestyle{myheadings} \markboth{\sc K.\ Bogdan --- Z.\ Palmowski
--- L.\ Wang} {\sc Yaglom limit for stable processes in cones}

\vspace{1.8cm}

\tableofcontents

\newpage

\section{Introduction}\label{sec:intro}

Let $0<\alpha<2$, $d=1,2,\ldots$, and let $X=\{X_t,t\geq 0\}$ be the isotropic $\alpha$-stable L\'evy process in $\Rd$.
We denote by $\mathbb{P}_x$
and $\mathbb{E}_x$ the probability and expectation for the process  starting from 
any $x\in\Rd$, see Section~\ref{sec:prel} for details.
Let $\Gamma\subset\mathbb{R}^d$ be an arbitrary Lipschitz cone with vertex at the origin $0$. We define
\begin{equation}\label{deftau}
\tau_\Gamma=\inf\{t>0: X_t \notin\Gamma\}\,,
\end{equation}
the  time of the first exit of $X$ from $\Gamma$.
The following measure $\mu$ will be called the Yaglom limit for $X$ and $\Gamma$.
\begin{thm}\label{thm:eYl}
There is a probability measure $\mu$ concentrated on $\Gamma$ such that for every Borel set $A\subset \Rd$,
\begin{equation}\label{def1}\lim_{t
\to\infty}\mathbb{P}_x\left(\frac{X_t}{t^{1/\alpha}}\in A \big| \tau_\Gamma >t\right)=\mu (A)\,,\quad x\in \Gamma\,.
\end{equation}
\end{thm}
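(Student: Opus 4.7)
The plan is to use the $\alpha$-self-similarity of $X$ and the conical invariance of $\Gamma$ to convert the temporal limit into a spatial limit of the killed transition density $p_\Gamma^t(x,y)$ near the vertex~$0$, and then to read the Yaglom measure off an approximate factorization of $p_\Gamma^1$ at $0$.

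\textbf{Step 1: scaling reduction.} Self-similarity and the invariance of $\Gamma$ under $x\mapsto cx$, $c>0$, give
$$p_\Gamma^t(x,y)=t^{-d/\alpha}\,p_\Gamma^1\bigl(x/t^{1/\alpha},\,y/t^{1/\alpha}\bigr).$$
A change of variable $y=t^{1/\alpha}u$ yields, for $y_t:=x/t^{1/\alpha}$,
$$\P_x\!\Bigl(X_t/t^{1/\alpha}\in A,\,\tau_\Gamma>t\Bigr)=\int_{A\cap\Gamma}p_\Gamma^1(y_t,u)\,du,$$
so the conditional probability in \eqref{def1} equals
$$\mu_{y_t}(A)=\frac{\int_{A\cap\Gamma}p_\Gamma^1(y_t,u)\,du}{\int_{\Gamma}p_\Gamma^1(y_t,u)\,du}.$$
Since $y_t\to 0$ along the ray through $x$, it suffices to show that $\mu_y$ converges weakly, as $\Gamma\ni y\to 0$, to a probability measure $\mu$ independent of the direction of approach.

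\textbf{Step 2: factorization at the vertex.} Let $M$ denote the (essentially unique) homogeneous $\alpha$-harmonic function in $\Gamma$ vanishing continuously on $\partial\Gamma\setminus\{0\}$, i.e.\ the Martin kernel of the killed process at the vertex. The crucial step is the approximate factorization
$$p_\Gamma^1(y,u)=M(y)\bigl(h(u)+o(1)\bigr),\qquad y\to 0,\ \text{locally uniformly in }u\in\Gamma,$$
with an accompanying integrable majorant $p_\Gamma^1(y,u)\le C\,M(y)\,\varphi(u)$, where $\varphi(u)\lesssim(1+|u|)^{-d-\alpha}$ captures both the Hardy-type singularity of $h$ at $0$ and the stable tail at infinity. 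The existence of $h$ and the factorization are obtained by applying the boundary Harnack principle at the vertex to $y\mapsto p_\Gamma^1(y,u)$ (which is $\alpha$-harmonic for the killed process away from $u$) and by identifying the limit via the Ikeda--Watanabe-type representation of entries into $\Gamma$, consistently with the Green potential formula for the survival probability advertised in the abstract. The function $h$ is then recognized as (a multiple of) the density of the entrance law of the $M$-process from $0$.

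\textbf{Step 3: tightness and passage to the limit.} Combining the upper bound with a matching lower bound $\int_\Gamma p_\Gamma^1(y,u)\,du\ge c\,M(y)$, obtained by applying the factorization on a fixed compact subset of $\Gamma$, the family $\{\mu_y\}_{y\to 0}$ is tight; Prokhorov's theorem extracts weak subsequential limits. Feeding the pointwise factorization back in and using dominated convergence with the majorant $\varphi$ identifies every subsequential limit as
$$\mu(A)=\frac{\int_{A\cap\Gamma}h(u)\,du}{\int_\Gamma h(u)\,du},$$
so the full limit exists and is a probability measure concentrated on $\Gamma$; independence of $x$ is built in, since $x$ enters only through $y_t\to 0$.

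\textbf{Main obstacle.} The heart of the argument is Step~2. Boundary Harnack alone gives only two-sided comparability $p_\Gamma^1(y,u)\asymp M(y)\varphi(u)$; upgrading this to an actual limit $p_\Gamma^1(y,u)/M(y)\to h(u)$, uniform on compacts and with an integrable majorant handling simultaneously $u\to 0$ and $|u|\to\infty$, is where the real work lies. This is where the representation of the survival probability as a Green potential and the single-point Martin boundary at the vertex of a Lipschitz cone must be leveraged; once Step~2 is in hand, Steps~1 and~3 are essentially formal consequences of scaling and Prokhorov's theorem.
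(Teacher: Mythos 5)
Your Steps 1 and 3 match the paper's strategy almost exactly: the scaling reduction to $\mu_{y}(A)=\int_A p^\Gamma_1(y,u)\,\rd u\,/\,\P_y(\tau_\Gamma>1)$ with $y=t^{-1/\alpha}x\to0$, tightness from the approximate factorization $p^\Gamma_1(y,u)\approx \P_y(\tau_\Gamma>1)\P_u(\tau_\Gamma>1)p_1(y-u)$ (this is \eqref{e:p1Gamma}, quoted from \cite{MR2602155,MR2722789}), Prokhorov, and a dominated-convergence upgrade from weak convergence to convergence on all Borel sets. But Step 2 --- which you yourself call the heart --- is a genuine gap, and the mechanism you sketch for it fails as stated. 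You propose to get $h(u)=\lim_{\Gamma\ni y\to0}p^\Gamma_1(y,u)/M(y)$ by ``applying the boundary Harnack principle at the vertex to $y\mapsto p^\Gamma_1(y,u)$, which is $\alpha$-harmonic away from $u$.'' It is not: the killed heat kernel is caloric (space--time harmonic) in $y$, not $\alpha$-harmonic, so \eqref{e:BHP} and the oscillation-reduction argument that yields boundary limits of ratios apply to it neither directly nor through the mean-value property; what they give, via \eqref{e:p1Gamma}, is only two-sided comparability $p^\Gamma_1(y,u)\approx M(y)\varphi(u)$ near the vertex, and, as you concede, comparability does not produce a limit. (Incidentally, $h=n_1$ has no singularity at $0$; it vanishes at the boundary, cf.\ \eqref{e:mubound}.) Since the rest of your argument is, as you say, essentially formal, the unproved existence of $h$ is precisely the content of the theorem.

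The paper's way around this is to never take limits of the heat kernel directly. It first proves the Green-potential asymptotics at the vertex (Lemma~\ref{T:Greenpotential}), $\lim_{\Gamma\ni x\to0}G_\Gamma f(x)/M(x)=C_0\int_\Gamma K f$, whose proof uses \eqref{e:BHP} and \eqref{e:martin} applied to $G_\Gamma(\cdot,y)$, which genuinely is regular harmonic in truncated cones. The Ikeda--Watanabe formula gives the representation $\P_x(\tau_\Gamma>1)=G_\Gamma P^\Gamma_1\kappa_\Gamma(x)$ in \eqref{lem:rsp}, so the lemma yields Theorem~\ref{T:survival}; and for $\phi\in C^\infty_c(\Gamma)$ one writes $P^\Gamma_1\phi=G_\Gamma P^\Gamma_1 u_\phi$ with $u_\phi=-\Delta^{\alpha/2}\phi$, so the same lemma identifies $\lim_{\Gamma\ni x\to0}P^\Gamma_1\phi(x)/\P_x(\tau_\Gamma>1)$ and hence pins down every subsequential weak limit of your measures $\mu_y$ on test functions. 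Only afterwards does the pointwise limit \eqref{e:blimit} appear: by Chapman--Kolmogorov, $p^\Gamma_2(x,y)=P^\Gamma_1\phi_y(x)$ with $\phi_y=p^\Gamma_1(\cdot,y)$ bounded and continuous, so $p^\Gamma_2(x,y)/\P_x(\tau_\Gamma>1)\to\mu(\phi_y)=n_2(y)$, and scaling then gives $n_t$ for all $t$, in particular your $h=n_1$. To salvage your outline you must either carry out this Green-potential detour or supply a substitute (e.g.\ a parabolic boundary Harnack principle with a limit statement at the vertex of a Lipschitz cone, which is not available off the shelf here); as written, Step 2 assumes the conclusion.
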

The above condition $\tau_\Gamma>t$ means that $X$ stays, or survives,
in $\Gamma$ for time longer than $t$. Theorem \ref{thm:eYl} asserts
that, 
given its survival, 
$X_t$ rescaled by $t^{1/\alpha}$ has  a limiting distribution independent of the starting point. We note that rescaling
is essential for the limit to be nontrivial.
The
Yaglom limit $\mu$ corresponds with the idea of 'quasi-stationarity', as
expressed by Bartlett \cite{MR0118550}:
\begin{quote}
It still may happen that the
time to extinction is so long
that it is still of more relevance to consider the effectively
ultimate distribution (called a quasi-stationary distribution) [...]
\end{quote}
Namely, $\mu$
is a quasi-stationary distribution for
$(t+1)^{-1/\alpha} X_t$ in the following
sense.
\begin{prop}
  \label{prop:qs}
Let $\mathbb{P}_\mu(\cdot)=\int_{\Gamma}\mathbb{P}_y(\cdot)\,\mu(\rd y)$.
For every Borel set $A\subset\Rd$,
\begin{equation}\label{eq:to be proved}
\mathbb{P}_\mu\left(\frac{X_t}{(t+1)^{1/\alpha}}\in A\big|\tau_\Gamma >t\right) = \mu(A)\,,\quad t\ge 0\,.%, \quad A\in\mathcal{B}( \Rd).
\end{equation}
\end{prop}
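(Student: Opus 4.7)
I would prove the proposition by a ``two-time Yaglom'' argument combined with the Markov property at time $1$. The $1/\alpha$-self-similarity of $X$ together with the scale-invariance of $\Gamma$ rewrites $\P_w(X_u/u^{1/\alpha} \in B \mid \tau_\Gamma > u)$ as $\P_{w/u^{1/\alpha}}(X_1 \in B \mid \tau_\Gamma > 1)$, so Theorem~\ref{thm:eYl} is equivalent to the statement that for every fixed $u > 0$ and every $\mu$-continuity Borel set $B$,
\[
\lim_{\Gamma \ni w \to 0} \P_w\!\left(\frac{X_u}{u^{1/\alpha}} \in B \,\Big|\, \tau_\Gamma > u\right) = \mu(B).
\]
I would apply this twice---at $u = 1$ and $u = 1+t$---and glue the two uses together via the Markov property at time $1$.

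\textbf{Main steps.} Fix $t \geq 0$, let $A$ be a $\mu$-continuity set, and set $B := (1+t)^{1/\alpha} A$. The display above at $u = 1+t$ reads
\[
\lim_{\Gamma \ni w \to 0} \P_w\!\left(X_{1+t} \in B \,\big|\, \tau_\Gamma > 1+t\right) = \mu(A). \qquad (\star)
\]
Setting $g(y) := \P_y(X_t \in B,\, \tau_\Gamma > t)$ and $h(y) := \P_y(\tau_\Gamma > t)$, the Markov property at time $1$ gives
\[
\P_w(X_{1+t} \in B,\, \tau_\Gamma > 1+t) = \E_w\bigl[1_{\tau_\Gamma > 1}\, g(X_1)\bigr], \qquad \P_w(\tau_\Gamma > 1+t) = \E_w\bigl[1_{\tau_\Gamma > 1}\, h(X_1)\bigr].
\]
Dividing both identities by $\P_w(\tau_\Gamma > 1)$,
\[
\P_w\!\left(X_{1+t} \in B \,\big|\, \tau_\Gamma > 1+t\right) = \frac{\E_w[g(X_1) \mid \tau_\Gamma > 1]}{\E_w[h(X_1) \mid \tau_\Gamma > 1]}.
\]
The display at $u = 1$ says precisely that the conditional law of $X_1$ given $\tau_\Gamma > 1$ under $\P_w$ converges weakly to $\mu$ as $w \to 0$ in $\Gamma$. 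Assuming that $g$ and $h$ are continuous and bounded on $\Gamma$, the right-hand side tends to
\[
\frac{\int_\Gamma g\,d\mu}{\int_\Gamma h\,d\mu} = \frac{\P_\mu(X_t \in B,\, \tau_\Gamma > t)}{\P_\mu(\tau_\Gamma > t)} = \P_\mu\!\left(\frac{X_t}{(1+t)^{1/\alpha}} \in A \,\Big|\, \tau_\Gamma > t\right).
\]
Comparing with $(\star)$ yields the required identity $\P_\mu(X_t/(1+t)^{1/\alpha} \in A \mid \tau_\Gamma > t) = \mu(A)$.

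\textbf{Main obstacle.} The delicate point is justifying the passage to the limit $w \to 0$, which requires enough regularity of $g$ and $h$ as functions of the starting point $y$. Since both are integrals of indicator-type functionals against the transition density of the process killed on leaving $\Gamma$, continuity in $y$ at $\mu$-almost every point (for $\mu$-continuity sets $A$) should follow from the smoothness of the stable density combined with the paper's approximate factorization of the killed heat kernel; restricting to $\mu$-continuity sets $A$ is harmless since $\mu$ is determined by its values on such sets. One also needs to know that the limiting conditional law at $u = 1$ charges $\Gamma$, which is built into the construction of $\mu$ via Prokhorov's theorem as advertised in the abstract.
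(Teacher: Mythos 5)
Your argument is correct, but it is a genuinely different route from the paper's. The paper proves Proposition~\ref{prop:qs} by a direct computation with the entrance law: since $\mu(\rd z)=n_1(z)\,\rd z$ by \eqref{QSdensity}, the Chapman--Kolmogorov identity \eqref{e:ck} gives $\P_\mu\bigl(X_t\in(1+t)^{1/\alpha}A,\ \tau_\Gamma>t\bigr)=\int_{(1+t)^{1/\alpha}A}n_{t+1}(y)\,\rd y$, and the scaling \eqref{e:scaling} turns this into $(1+t)^{-\beta/\alpha}\mu(A)$; taking $A=\Rd$ yields $\P_\mu(\tau_\Gamma>t)=(1+t)^{-\beta/\alpha}$ and division finishes the proof. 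You instead bypass the density $n_t$ entirely: you combine the Markov property at time $1$ with two uses of the convergence of the conditional law of $X_1$ given $\tau_\Gamma>1$ as the starting point tends to the vertex, and this gluing argument is sound. Two caveats. First, your claim that Theorem~\ref{thm:eYl} is \emph{equivalent} to the vertex limit $\lim_{\Gamma\ni w\to 0}\P_w(X_u/u^{1/\alpha}\in B\mid\tau_\Gamma>u)=\mu(B)$ overstates what scaling gives: Theorem~\ref{thm:eYl} only yields this limit along radial approach $w=s^{-1/\alpha}x$ with $x$ fixed, since no uniformity in the starting direction is claimed. This is harmless, because your whole argument can be run along a single ray (fix $x\in\Gamma$, let $w=s^{-1/\alpha}x$, $s\to\infty$), or alternatively you may quote the unrestricted weak convergence $\mu_x\Rightarrow\mu$ as $\Gamma\ni x\to 0$, which is exactly \eqref{T:yaglom} in the paper. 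Second, the regularity of $g$ and $h$ that you flag as the main obstacle is not really needed: Theorem~\ref{thm:eYl} gives convergence for \emph{every} Borel set, hence setwise convergence of the conditional laws, which already implies convergence of integrals of bounded measurable functions; in any case $h(y)=\P_y(\tau_\Gamma>t)$ and $g(y)=\int_{B\cap\Gamma}p^\Gamma_t(y,z)\,\rd z$ are bounded and continuous, and $\int h\,\rd\mu>0$ since $h>0$ on $\Gamma$ and $\mu(\Gamma)=1$, so the division in the limit is legitimate, and the restriction to $\mu$-continuity sets can indeed be removed at the end since both sides are probability measures agreeing on a generating $\pi$-system. What the two approaches buy: the paper's computation additionally identifies the exact survival probability $\P_\mu(\tau_\Gamma>t)=(1+t)^{-\beta/\alpha}$ and exhibits the link between $\mu$ and the entrance law $n_t$, while your argument is softer and more portable---it derives quasi-stationarity of $(t+1)^{-1/\alpha}X_t$ from nothing more than the Yaglom convergence at the vertex, scaling, and the Markov property, without the machinery of Theorem~\ref{mainresult}.
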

Note that $Y_t=(t+1)^{-1/\alpha} X_t$ is a time-inhomogenous Markov process and under $\mathbb{P}_\mu$, the law of $Y_0$ is $\mu$. 

\bigskip

This is the first paper where the
Yaglom limit  is identified for the multi-dimensional
$\alpha$-stable
L\'evy processe.
For the one-dimensional self-similar processes, including the  symmetric $\alpha$-stable  L\'evy process  in the one-dimensional cone $\Gamma=(0,\infty)$, Yaglom limits similar to \eqref{def1},
and also using rescaling, were given by Haas and Rivero \cite{MR2971725}.
Their proofs
rely on precise estimates for the tail distribution of exponential functionals of non-increasing L\'evy processes and are completely different from ours.
As we will see below, the Yaglom limit may be obtained from the asymptotics (i.e. limits) of the  
survival probability $\mathbb{P}_x(\tau_\Gamma>t)$.
We note that such
asymptotics 
were studied for the multi-dimensional Brownian motion by DeBlassie \cite{MR863716}. Ba\~{n}uelos and Smits \cite{MR1465162} gave the asymptotics of the heat kernel of the cone
in terms of the orthonormal eigenfunctions of the Laplace-Beltrami operator on the cone's spherical cap.
Denisov and Wachtel \cite{MR3342657} derived a result similar to Theorem~\ref{thm:eYl} for multidimensional random walks
by using coupling with the Brownian motion.
The tail distribution of $\tau_\Gamma$ for the isotropic  $\alpha$-stable L\'evy process  and wedges $\Gamma\subset \mathbb{R}^2$ was estimated by DeBlassie \cite{MR1062058}. Ba\~{n}uelos and Bogdan \cite{MR2075671} also
provided estimates but not asymptotics for
general cones in $\Rd$. They used
the boundary Harnack principle (BHP),
which turns out to be very useful also in our situation,
because
it, in fact, yields 
the 
asymptotics of the survival probability $\mathbb{P}_x(\tau_\Gamma>1)$
for $x\to 0$, as we show below.
These asymptotics %is
are given
in Theorem~\ref{T:survival}, and
they lead to
Theorem~\ref{thm:eYl}, to sharp estimates for the density function of the Yaglom limit $\mu$,
and to
the
existence and estimates of laws of
excursions of  the stable process from the vertex into the cone, which we give in Theorem~\ref{mainresult}.

Information on quasi-stationary (QS) distributions for time-homogeneous Markov processes can be found in  the classical works of Seneta and Vere-Jones \cite{MR0207047}, Tweedie \cite{MR0443090}, Jacka and Roberts \cite{MR1363332}. The bibliographic database of Pollet \cite{pollett} gives detailed history of
QS distributions. In particular, 
Yaglom \cite{MR0022045} was the first to explicitly  identify QS
distributions 
for the subcritical Bienaym\'e-Galton-Watson branching process.
Part of the results on QS distributions
concern Markov chains on positive integers with an absorbing state at the origin
\cite{MR2986807, MR1133722, MR1334159, MR0346932, MR0207047,
  MR3247530}. 
Other objects of study are the extinction probabilities for continuous-time branching process and the Fleming-Viot process \cite{MR3498004, MR2318407, MR2299923}.
A separate topic %are
is the one-dimensional L\'evy processes exiting from the positive half-line.
Here the case of the Brownian motion with drift was resolved by Martinez and San Martin \cite{MR1303922}, complementing the result for random walks obtained by Iglehart \cite{MR0368168}. The case of jump L\'evy processes was studied by   E.~Kyprianou \cite{MR0292201}, A.~Kyprianou and Palmowski \cite{MR2299923} and Mandjes et al. \cite{MR2959448}.
These papers
are based on the Wiener-Hopf factorization and
Tauberian theorems. They are intrinsically one-dimensional and they do not use the boundary asymptotics of harmonic functions or rescaling to obtain the limiting distribution.
We also note in passing that these results relate to
the behavior of the one-dimensional L\'evy processes and random walks conditioned to stay positive, for which we refer the reader to Bertoin \cite{MR1232850}, Bertoin and Doney \cite{MR1372336}, and Chaumont and Doney \cite{MR2164035}.

On a general level our development depends on
a compactness argument based on recent sharp estimates of the heat kernel of cones and on a formula expressing the survival probability $\mathbb{P}_x(\tau_\Gamma>t)$ as a Green potential.
The latter allows us to obtain the spatial asymptotics of the survival probability at
the vertex of the cone $\Gamma$ in terms of the cone's Martin kernel with the pole at infinity, and is a consequence of BHP.
By scaling we then obtain the asymptotics of the survival probability
as $t\rightarrow\infty$.
The construction allows for the identification of the limiting
boundary behavior of the heat kernel at the vertex of the cone. Such
asymptotics
are completely new, and may be regarded as a culmination of the study of the Dirichlet fractional Laplacian, which started with boundary estimates and asymptotics of harmonic functions, developed into estimates and asymptotics of the Green function, gave the Martin representation of harmonic functions, and resolved into sharp estimates of the heat kernel.
The development was initiated by Bogdan \cite{MR1438304} and Song and Wu \cite{MR1719233} with proofs of (BHP) for the fractional Laplacian.
Then Jakubowski \cite{MR1991120} gave sharp estimates of the Green function. Bogdan et al. \cite{MR2365478} gave  the boundary limits of ratios of harmonic functions and
Bogdan et al.  \cite{MR2722789} gave sharp estimates of the Dirichlet heat kernel.
Related works on the Dirichlet problem in cones are
given
by DeBlassie \cite{MR1062058}, Kulczycki \cite{ MR1750907}, Kulczycki and Burdzy \cite{MR1959790}, M{\'e}ndez-Hern{\'a}ndez \cite{MR1936081}, Bogdan and Jakubowski \cite{MR2182071}, Michalik \cite{MR2213639}, Kulczycki and Siudeja \cite{MR2231884}, and Bogdan and Grzywny \cite{MR2602155}.
For smooth domains we refer to the pioneering works by Kulczycki \cite{MR1490808}, Song and Chen \cite{MR1654824} and Kim et al. \cite{MR2994122, MR3232031}.
Historically, the results for cones often preceded and informed generalizations to Lipschitz and arbitrary open sets.
We expect similar generalizations for the asymptotics of  heat kernels. 
The present paper only resolves the asymptotics of the heat kernel of the fractional Laplacian in the  Lipschitz cone at the vertex, so there is much more work left to do.

The paper is organized as follows. In Section \ref{sec:prel} we give basic notation and facts.
In Section \ref{sec:main} we present our main results, which complement Theorem~\ref{thm:eYl} and Proposition~\ref{prop:qs}.
Most of the proofs are given in Section~\ref{sec:proofs}. In Section~\ref{sec:cauchy} we discuss in detail the Cauchy process on the positive half-line and we give a spectral decomposition of its Yaglom limit.
\section{Preliminaries}\label{sec:prel}

As defined in
the introduction, $X=\{X_t,\  t>0\}$ is the isotropic $\alpha$-stable L\'evy process
on the Euclidean space $\Rd$.
The process is determined by the jump measure with the density function
\begin{equation}
  \label{eq:lm}
  \nu(y)=\frac{2^{\alpha}\Gamma((d+\alpha)/2)}{\pi^{d/2}|\Gamma(-\alpha/2)|}
|y|^{-d-\alpha}\,, \quad y\in \Rd\,,
\end{equation}
where $0<\alpha<2$, $d=1,2,\ldots$. The coefficient in \eqref{eq:lm}
is
chosen so that
\begin{equation}
  \label{eq:trf}
  \int_\Rd \left[1-\cos(\xi\cdot y)\right]\nu(y)\rd y=|\xi|^\alpha\,,\quad
  \xi\in \Rd\,,
\end{equation}
for convenience.
Here $\xi\cdot y$ is the Euclidean scalar product and $|\xi|$ is the Euclidean norm.
We always assume in this paper that the considered sets, measures and
functions are %Borelian.
all Borel. 
The process $X$ is Markovian
with the following time-homogeneous transition probability
$$
P_t(x,A)=\int_A p_{t}(x-y)\rd y\,,\quad t>0\,,\;x\in \Rd\,,\; A\subset \Rd,
$$
and $p_t$ 	is the smooth real-valued function on $\Rd$ with the Fourier transform:
\begin{equation}
  \label{eq:dpt}
  \int_ \Rd p_t(x)\e^{\im x\cdot\xi}\,\rd x=\e^{-t|\xi|^\alpha}\,,\quad \xi\in
   \Rd\,.
\end{equation}
In particular, if $\alpha=1$, then $X$ is a Cauchy process, and we have
\begin{equation*}
p_t(x)=
\Gamma((d+1)/2)\pi^{-(d+1)/2}
\frac{t}{\big(|x|^2+t^2)^{(d+1)/2}}\,,
\end{equation*}
see \cite{MR2569321,MR0290095}.
For every $\alpha\in (0,2)$, the infinitesimal generator of $X$ is the fractional Laplacian,
\begin{equation}\label{eq:fL}
   \Delta^{\alpha/2}\varphi(x) =
\lim_{\varepsilon \downarrow 0}\int\limits_{|y|>\varepsilon}
   \left[\varphi(x+y)-\varphi(x)\right]\nu(y)
\rd y\,,
\quad
x\in \Rd\, ,
   \end{equation}
defined at least on smooth compactly supported functions $\phi\in C^\infty_c(\Rd)$,
cf. \cite{MR1671973, MR2569321, MR0481057, MR2013738, MR1873235, MR1739520, MR1336382, 2015arXiv150707356K}.
The following scaling property is a consequence of (\ref{eq:dpt}),
\begin{equation}
  \label{eq:sca}
  p_t(x)=t^{-d/\alpha}p_1(t^{-1/ \alpha}x)\,,\quad x\in \Rd\,,\;t>0\,.
\end{equation}
Furthermore,
\begin{equation}
  \label{eq:setd}
c^{-1} \left(\frac{t}{|x|^{d+\alpha}} \land
        t^{-d/\alpha}\right) \le p_t(x) \le c
        \left(\frac{t}{|x|^{d+\alpha}} \land t^{-d/\alpha}\right)\,,
\quad x\in  \Rd\,,\;t>0\,,
\end{equation}
see \cite{MR0119247, 2013arXiv13050976B, MR2013738} for the explicit constant $c$.
Below we will use the notation $f\approx g$ when functions $f$, $g\ge 0$ are {\it comparable} i.e. their ratio is bounded between two positive constants (uniformly on the whole domain of the functions).
In particular we can rewrite (\ref{eq:setd}) as follows:
\begin{equation}
  \label{eq:ppfe}
p_t(x)\approx t^{-d/\alpha}\wedge \frac{t}{|x|^{d+\alpha}}
\,, \quad x\in \Rd\,,\; t>0\,.
\end{equation}
We will also write $\lim f(x)/g(x)=1$ as $f(x)\sim g(x)$.

As stated, $\Gamma$ denotes a generalized  Lipschitz cone in $\R^d$ with vertex $0$, that is,
an open Lipschitz set $\Gamma \subset \R^d$ such that $0
\in \partial \Gamma$, and if $y \in \Gamma$ and $r
> 0$ then $r y \in \Gamma$.
Recall that an open set $D \subset \R^d$ is called Lipschitz
if there exist  $R > 0$ and
$\Lambda > 0$ such that for every $Q \in \partial D$, there exist a
Lipschitz function $\phi_Q$: $\R^{d - 1} \to \R$ with Lipschitz
constant not greater than $\Lambda$ and an orthonormal coordinate
system $\CS_Q$ such that if $y = (y_1, \ldots, y_{d - 1}, y_d)$ in
$\CS_Q$ coordinates, then
\[ D \cap B(Q, R) = \{y:\ y_d > \phi_Q(y_1, \ldots, y_{d - 1}) \} \cap
B(Q, R)\,. \]
We note that the trivial cones $\Gamma=\Rd$ and $\Gamma=\emptyset$ are excluded from our considerations because we require $0\in \partial \Gamma$, and the Lipschitz condition excludes, e.g., $\Rd\setminus\{0\}$. In particular for $d=1$, $\Gamma$ is necessarily a half-line. We note that the cone $\Gamma$ is
characterized by its intersection with the unit sphere $\S_{d - 1}
=\{x\in \R^d:|x|=1\}$.

The first exit time
from $\Gamma$, 
as defined in (\ref{deftau}), yields the heat kernel $p^\Gamma_t(x,y)$ of the cone,
\begin{equation}\label{Dirichlet}
p^\Gamma_t(x,y):=p_t(x,y)-\E_x[\tau_\Gamma<t;\,
p_{t-\tau_\Gamma}(X_{\tau_\Gamma},y)],\quad x\,,\; y\in \Rd,\,\; t>0\,,
\end{equation}
where $\E_x[\tau_\Gamma<t;\,
p_{t-\tau_\Gamma}(X_{\tau_\Gamma},y)]=\int\limits_{\{\tau_\Gamma<t\}}
p_{t-\tau_\Gamma}(X_{\tau_\Gamma},y)\rd \P_x$.
For bounded or nonnegative functions $f$ we have
$$P_t^\Gamma f(x):=\E_x[f(X^\Gamma_t)]=\E_x[\tau_\Gamma<t;\, f(X_t)]=\int_\Rd f(y)p^\Gamma_t(x,y)\rd y\,.$$
We also note that
\begin{equation}\label{eq:**}
0\leq p^\Gamma_t(x,y)=p_t^\Gamma(y,x)\leq p_t(y-x)\,,
\end{equation}
and $p^\Gamma$ satisfies the Chapman-Kolmogorov equations:
\begin{equation}\label{eq:CKpG}
\int p_s^\Gamma(x,y)p_t^\Gamma(y,z)\rd y=p_{t+s}^\Gamma(x,z),\quad
s\,,\; t>0\,,\quad x\,,\;z\in \Rd\,,
\end{equation}
see, e.g., \cite{MR1671973,Bogdan2016, MR2677618}. Since $\Gamma$ is Lipschitz, by the exterior cone condition and Blumenthal 0-1 law, $\mathbb{P}_x(\tau_\Gamma)=0$ if $x\in \Gamma^c$, in particular $p_t^\Gamma(x,y)=0$ whenever $x$ or $y$ are outside of $\Gamma$.
We note that
\begin{equation}\label{survival0}
\P_x(\tau_\Gamma>t)=\int_\Gamma p^\Gamma_t(x,y)\rd y\,.\end{equation}
From (\ref{eq:sca}), the following scaling property follows:
\begin{equation}
  \label{eq:scD}
  p^\Gamma_t(x,y)=t^{-d/\alpha}
  p_1^{t^{-1/\alpha}\Gamma}(t^{-1/\alpha}x,t^{-1/\alpha}y)\,,\quad
  x\,,\; y\in \Rd\,,\; t>0\,.
\end{equation}
By (\ref{survival0}), a similar scaling holds for the survival probability:
$$\P_x(\tau_\Gamma>t)=\P_{t^{-1/\alpha}x}(\tau_{t^{-1/\alpha}\Gamma}>1).$$
We define the Green function of $\Gamma$:
$$G_\Gamma(x,y)=\int_0^\infty p_t^\Gamma(x,y)\rd t,\quad x\,,\; y\in \Rd\,,$$
and the Green operator:
$$G_\Gamma f(x)=\int_{\Gamma}G_\Gamma(x,y)f(y)\;\rd y\,,$$
for integrable or nonnegative functions $f$.
Clearly, by (\ref{eq:**}),
\begin{equation}\label{eq:supG}
P_t^\Gamma \left( G_\Gamma(\cdot,y)\right) (x)\le
G_\Gamma(x,y)\,,\quad t>0\,, \; x\,,\; y\in \Rd\,.
\end{equation}
We should note that $G_\Gamma$ is always locally integrable because $\Gamma\neq \mathbb{R}$ and $\Gamma\neq\mathbb{R}\setminus\{0\}$, cf. \cite{MR2256481}.

For $r\in (0,\infty)$ we let $B_r = \{ |x| < r\}$ and we define the truncated cone
$$\Gamma_r = \Gamma \cap B_r\,.$$
By the strong Markov property, for $t>0$ and $x$, $y \in {\Rd}$,
\begin{equation}
  p^{\Gamma}_t(x,y) = p^{\Gamma_{r}}_t(x, y) + \E_x \left[
    \tau_{\Gamma_{r}} < t;\,
    p^{\Gamma}_{t - \tau_{\Gamma_{r}}} (X_{\tau_{\Gamma_{r}}}, y) \right]\,.
\end{equation}
Integrating the above identity against $\rd t$ (see \cite[eq. (15)]{MR2365478}), we obtain
\begin{equation}
  \label{e:GGamma2}
  G_{\Gamma}(x, y) = G_{\Gamma_{r}}(x, y) + \E_x \left[
    G_{\Gamma}(X_{\tau_{\Gamma_{r}}}, y) \right]\,.
\end{equation}
In particular, $x\to G_\Gamma(x,y)$
is regular $\alpha$-harmonic on $\Gamma_r$ if $|y| > r$.
We recall that a function $u: \R^d\to \R$ is called regular harmonic
with respect to $X$ on an open set $\Gamma\subset \R^d$ if
\[ u(x) = \E_x \left[ \tau_\Gamma < \infty;\, u(X_{\tau_\Gamma}) \right]\,, \quad x
 \in \Gamma\,, \]
 where we assume that the expectation is finite.

Two more facts are crucial in our development.
First,
if $I\subset (0,\infty)$
and $A\subset \Gamma$, $B\subset (\overline{\Gamma})^c$, then
\begin{align}\label{eq:IW}
\P_x[\tau_\Gamma\in I,\; X_{\tau_\Gamma-}\in A,\; X_{\tau_\Gamma}\in B]=
\int_I \int_{B-y} \int_A p_u^
\Gamma(x,\rd y)\nu(z)\rd z \rd u\,.
\end{align}
This identity is called the Ikeda-Watanabe formula \cite{MR0142153}, and gives the joint distribution of $(\tau_\Gamma,X_{\tau_\Gamma-},X_{\tau_\Gamma})$,
see also \cite[Lemma 1]{MR2345912}, \cite{1411.7952}, \cite[Appendix A]{MR2357678}, \cite[VII.68]{MR745449}
or \cite[Theorem 2.4]{MR2255353}.

Second, we use the following boundary Harnack principle (BHP) from Bogdan \cite{MR1438304}:
There is a constant $C = C(\Gamma, \alpha) > 0$ such that
  if $r>0$ and functions
  $u, v\ge 0$ are regular harmonic in $\Gamma_{2 r}$ with respect to $X$  and vanish on
  $\Gamma^c \cap B_{2 r}$, then
  \begin{equation}
    \label{e:BHP}\tag{BHP}
    u(x)v(y)\leq C u(y)v(x)\,, \quad x\,,\; y \in
    \Gamma_r\,.
  \end{equation}
  Generalizations of \eqref{e:BHP} can be found in \cite{MR1719233, MR2365478} for the fractional Laplacian
and in \cite{MR3271268, MR2994122, MR3232031} for more general jump Markov processes.
By an oscillation-reduction argument, \eqref{e:BHP} implies that $\lim_{\Gamma\ni x\to 0} u(x)/v(x)$ exists \cite{MR2075671, MR2365478}.
Without essential loss of generality, in what follows we assume that 
$$\id := (0, \ldots, 0, 1) \in
\Gamma\,.$$
The above 
directly  implies the existence of
\begin{equation}
  \label{e:martini}
  M(y) = \lim_{\Gamma \ni x, |x|\to \infty} \frac{G_{\Gamma}(x,y)}{G_{\Gamma}(x, \id)}\,, \quad y\in \R^d\,.
\end{equation}
$M$ is called the Martin kernel with the pole at infinity for $\Gamma$. It is the unique
nonnegative function on $\R^d$
that is
regular harmonic
with respect to $X$ on every $\Gamma_r$
and such that $M = 0$
on $\Gamma^c$ and $M(\id) = 1$ \cite[Theorem 3.2]{MR2075671}.
The function is locally bounded on $\R^d$ and
homogeneous of degree $\beta = \beta(\Gamma, \alpha)$, that is,
 \begin{equation}\label{eq:hoM}
M(x) = | x |^{\beta} M(x / |x|)\,, \quad x \neq 0\,.
\end{equation}
Furthermore,
$0  < \beta < \alpha$.
The exponent $\beta$ is decreasing in $\Gamma$ and it delicately
depends on the geometry of $\Gamma$. When $\Gamma$ is a right-circular
cone, a rather explicit estimate
for $M$ is available \cite[Theorem~3.13]{MR2213639}, expressed in terms of $\beta$.  More information on $\beta$ for narrow right-circular cones is given in \cite{MR3339224}.
As we shall see below, using \eqref{e:BHP} and $M$ we can capture the boundary asymptotics of harmonic functions and some Green potentials. 

Following \cite{MR2075671} we consider the Kelvin transformation  $K$ of $M$:
 \begin{equation}
   \label{e:kelvin}
   K(x) = |x|^{\alpha - d} M(x/|x|^2) = |x|^{\alpha - d - \beta}
   K(x/|x|)\,, \quad x\neq 0\,,
 \end{equation}
see also Bogdan and {\.Z}ak \cite{MR2256481} for a more general discussion. The function $K$
is called the Martin kernel at $0$ for $\Gamma$. Clearly, $K(\id) = 1$ and $K = 0$ on $\Gamma^c$.
By \cite[Theorem 3.4]{MR2075671},
 \[ K(x) = \E_x \left[\tau_B < \infty;\, K(X_{\tau_B}) \right]\,, \quad x
 \in \R^d\,, \]
for every open set $B \subset \Gamma$ with $\dist(0, B) > 0$.
In particular $K$ is $\alpha$-harmonic in $\Gamma$ and satisfies
\begin{equation}
  \label{e:martin}
  K(y) = \lim_{\Gamma \ni x \to 0} \frac{G_{\Gamma}(x,y)}{G_{\Gamma}(x, \id)}\,, \quad y\in \R^d\setminus\{0\}\,.
\end{equation}

\section{Full picture}\label{sec:main}
Theorem~\ref{thm:eYl} and Proposition~\ref{prop:qs} are manifestations of phenomena
which we present 
later in this section.

By \eqref{e:BHP}, a finite positive limit
\begin{equation}\label{eq:C0}
C_0 = \lim_{\Gamma \ni x \to 0} G_{\Gamma}(x, \id) / M(x)
\end{equation}
exists.
We denote
\begin{equation}\label{defkappa}\kappa_{\Gamma}(z) = \int_{\Gamma^c} \nu(z - y) \rd y\,,
\end{equation}
where $\nu$ is a jump measure defined in (\ref{eq:lm}),
and we define
  \begin{equation}\label{Cjeden} C_1 = C_0 \int_{\Gamma} \int_{\Gamma}
      K(y) p^{\Gamma}_1(y,z)
  \kappa_{\Gamma}(z) \rd z \rd y\,.  \end{equation}

\begin{thm}\label{T:survival}
We have $0<C_1<\infty$ and
  \begin{equation}
    \label{e:survival1}
  \lim_{\Gamma \ni x \to 0} \frac{\P_x (\tau_{\Gamma} > 1)}{M(x)} =
  C_1\,.
  \end{equation}
  \end{thm}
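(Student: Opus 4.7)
The plan is to represent $\P_x(\tau_\Gamma>1)$ as a Green potential of an explicit density, identify the pointwise limit of the integrand as $\Gamma\ni x\to 0$ from (\ref{eq:C0}) and (\ref{e:martin}), and pass to the limit by dominated convergence with a majorant furnished by (\ref{e:BHP}).

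\textbf{Step 1: Green potential representation.} Since $X$ is a pure-jump isotropic stable process and $\Gamma$ is a proper Lipschitz cone, one first checks $\P_x(\tau_\Gamma<\infty)=1$ for $x\in\Gamma$. For instance, by scaling, $\P_x(\tau_\Gamma>t)=\P_{x/t^{1/\alpha}}(\tau_\Gamma>1)$, while $\P_y(\tau_\Gamma>1)\to 0$ as $\Gamma\ni y\to 0$ by the regularity of the vertex for $\Gamma$. Applying the Ikeda--Watanabe formula (\ref{eq:IW}) to $\{\tau_\Gamma\in(1,\infty)\}$, the Chapman--Kolmogorov identity (\ref{eq:CKpG}) for $u>1$, and Fubini, I obtain
\[
\P_x(\tau_\Gamma>1)=\int_\Gamma G_\Gamma(x,y)\,q(y)\,\rd y,\qquad q(y):=\int_\Gamma p_1^\Gamma(y,z)\,\kappa_\Gamma(z)\,\rd z.
\]

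\textbf{Step 2: Pointwise limit and BHP-based domination.} Factoring
\[
\frac{G_\Gamma(x,y)}{M(x)}=\frac{G_\Gamma(x,y)}{G_\Gamma(x,\id)}\cdot\frac{G_\Gamma(x,\id)}{M(x)},
\]
the limits (\ref{e:martin}) and (\ref{eq:C0}) give $G_\Gamma(x,y)/M(x)\to C_0 K(y)$ pointwise in $y\in\Gamma\setminus\{0\}$. For each such $y$, the functions $G_\Gamma(\cdot,y)$ and $M$ are regular harmonic on $\Gamma_{|y|}$ and vanish on $\Gamma^c\cap B_{|y|}$, so (\ref{e:BHP}) applied at $r=|y|/2$, followed by sending the second argument to $0$, yields the $x$-uniform bound
\[
\frac{G_\Gamma(x,y)}{M(x)}\le C\,C_0\,K(y),\qquad x\in\Gamma,\;|x|<|y|/2,
\]
with $C=C(\Gamma,\alpha)$. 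This controls the integrand on $\{|y|>2|x|\}$. For the complementary shrinking region $\{|y|\le 2|x|\}$, I use the crude bound $G_\Gamma(x,y)\le G_{\R^d}(x,y)$, boundedness of $q$ near $0$ and the homogeneity of $M$ to show that this contribution is $O(|x|^{\alpha-\beta})\to 0$, where $\beta<\alpha$ is essential.

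\textbf{Step 3: Integrability of $Kq$ and conclusion.} By the homogeneity $K(y)=|y|^{\alpha-d-\beta}K(y/|y|)$ with $0<\beta<\alpha$, together with boundedness of $q$ on a neighborhood of $0$, the product $Kq$ is integrable at the vertex because $\alpha-d-\beta>-d$. At infinity, from (\ref{eq:ppfe}) and a standard estimate $\kappa_\Gamma(z)\le c|z|^{-\alpha}$ one derives $q(y)\le c|y|^{-d-\alpha}$, and the product $Kq$ is integrable since $\beta>0$. Dominated convergence then produces $\P_x(\tau_\Gamma>1)/M(x)\to C_0\int_\Gamma K(y)q(y)\,\rd y=C_1$, and $C_1>0$ by strict positivity of $K$, $p_1^\Gamma$ and $\kappa_\Gamma$ on the relevant sets. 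The main obstacle is matching the (\ref{e:BHP})-based majorant, which is valid only for $|y|>2|x|$, with a direct estimate on the complementary region near the vertex; both hinge on the strict inequality $\beta<\alpha$, which simultaneously makes the small-$|y|$ contribution vanishing and secures integrability of $Kq$ at the vertex.
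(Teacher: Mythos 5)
Your overall strategy matches the paper's: write $\P_x(\tau_\Gamma>1)=G_\Gamma q(x)$ with $q=P_1^\Gamma\kappa_\Gamma$, identify the pointwise limit of $G_\Gamma(x,y)/M(x)$ via \eqref{e:martin} and \eqref{eq:C0}, and pass to the limit. The paper, however, isolates the passage to the limit as a standalone lemma (Lemma~\ref{T:Greenpotential}) whose hypotheses are verified for $f=q$ using the approximate heat-kernel factorization \eqref{e:p1Gamma}, and proves that lemma by splitting the integral at a fixed radius $\delta$ and estimating the near-vertex part against $G_\Gamma(x,\id)$ rather than against a power of $|x|$. Your shortcuts leave two genuine gaps.

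First, to control $\{|y|\le 2|x|\}$ you use $G_\Gamma\le G_{\R^d}$, boundedness of $q$, and homogeneity of $M$ to claim this contribution is $O(|x|^{\alpha-\beta})$. This neglects that both $G_\Gamma(x,\cdot)$ and $M(x)$ decay as $x\to\partial\Gamma$ tangentially, while $G_{\R^d}$ does not. After dividing by $M(x)=|x|^\beta M(x/|x|)$ you actually get a bound of order $|x|^{\alpha-\beta}/M(x/|x|)$, which need not vanish if $x/|x|\to\partial\Gamma\cap\S_{d-1}$ at a comparable rate; the limit $\Gamma\ni x\to0$ allows exactly such approaches. (Also, $G_{\R}$ does not exist for $d=1$, $\alpha\ge1$.) The paper avoids this by normalizing the near-vertex piece by $G_\Gamma(x,\id)$ — which, by \eqref{e:BHP}, has the same boundary decay as $M(x)$ near $0$ — and uses the decomposition \eqref{e:GGamma2} together with Ikeda--Watanabe and BHP to get the bound \eqref{e:Gammasmall}, $c_6\delta^{\alpha-\beta}$, uniform in $x\in\Gamma_{\delta/2}$, including tangential $x$.

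Second, your integrability argument at infinity relies on the claimed estimate $\kappa_\Gamma(z)\le c|z|^{-\alpha}$, which is false: $\kappa_\Gamma(z)\approx\dist(z,\Gamma^c)^{-\alpha}$ and $\dist(z,\Gamma^c)\le|z|$, so $\kappa_\Gamma(z)\ge c|z|^{-\alpha}$ with unbounded blow-up near $\partial\Gamma$ at every scale. Likewise, boundedness of $q$ near $0$ is asserted but not a triviality: for $\alpha\ge1$ the singularity of $\kappa_\Gamma$ near $\partial\Gamma$ is not locally integrable, so one needs the boundary decay of $p_1^\Gamma(y,\cdot)$ to compensate — this is precisely what \eqref{e:p1Gamma} provides and what yields \eqref{L:fGamma} in the paper. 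The integrability condition in Lemma~\ref{T:Greenpotential} is then immediate from \eqref{lem:rsp}: $G_\Gamma q(\id)=\P_\id(\tau_\Gamma>1)\le1$, with no need for asymptotics of $\kappa_\Gamma$ or $q$ at infinity.
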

The proofs of Theorem~\ref{T:survival} and the other results of this section are mostly deferred to Section~\ref{sec:proofs}, to allow for a streamlined presentation.

Theorem~\ref{T:survival}, the scaling property of $X$  and the $\beta$-homogeneity of $M$, that is
(\ref{eq:hoM}) and (\ref{eq:sca}), yield the following result, which refines Lemma 4.2 of Ba{\~n}uelos and Bogdan \cite{MR2075671}.
\begin{cor}\label{assurv}
Uniformly as $\Gamma \ni t^{-1/\alpha} x \to 0$ we have
  \begin{equation}
    \label{e:survivalt}
    \P_x(\tau_{\Gamma} > t) \sim C_1 M(x) t^{- \beta /\alpha}\,.
  \end{equation}
\end{cor}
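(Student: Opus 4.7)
The plan is to reduce the $t\to\infty$ asymptotic to the $x\to 0$ asymptotic of Theorem~\ref{T:survival} by exploiting the two scale-invariances built into the problem: the $\alpha$-self-similarity of $X$, and the fact that $\Gamma$, being a cone with vertex at $0$, is invariant under dilations $y\mapsto r y$ with $r>0$.

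First, I would invoke the scaling identity for the survival probability already recorded in the preliminaries, namely
\[
\P_x(\tau_\Gamma>t)=\P_{t^{-1/\alpha}x}(\tau_{t^{-1/\alpha}\Gamma}>1),
\]
and then observe that $t^{-1/\alpha}\Gamma=\Gamma$ because $\Gamma$ is a cone with vertex at the origin. This collapses the left-hand side to $\P_{t^{-1/\alpha}x}(\tau_\Gamma>1)$, so the problem becomes one about the spatial behavior of the survival probability at time $1$ near the vertex.

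Next, I would apply Theorem~\ref{T:survival} with $y:=t^{-1/\alpha}x$ in place of $x$. The hypothesis of that theorem is precisely that $\Gamma\ni y\to 0$, which is the regime $\Gamma\ni t^{-1/\alpha}x\to 0$ stated in the corollary. Consequently
\[
\P_{t^{-1/\alpha}x}(\tau_\Gamma>1)\sim C_1\,M(t^{-1/\alpha}x).
\]
The $\beta$-homogeneity of $M$, equation~\eqref{eq:hoM}, gives $M(t^{-1/\alpha}x)=t^{-\beta/\alpha}M(x)$, and combining the last three displays yields the claim
\[
\P_x(\tau_\Gamma>t)\sim C_1\,M(x)\,t^{-\beta/\alpha}.
\]
The word \emph{uniformly} in the statement is automatic: Theorem~\ref{T:survival} asserts that $\P_y(\tau_\Gamma>1)/M(y)\to C_1$ along any sequence $\Gamma\ni y\to 0$, which, after the scaling reduction $y=t^{-1/\alpha}x$, is exactly the uniformity asserted here.

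There is essentially no obstacle beyond being careful to use the cone property $r\Gamma=\Gamma$ and the homogeneity \eqref{eq:hoM}; all the analytic content is packaged inside Theorem~\ref{T:survival}, and the corollary is a formal scaling consequence. The only point to verify with a line of text is the implicit assumption $M(x)>0$ for $x\in\Gamma$, which is immediate since $M$ is regular $\alpha$-harmonic in every $\Gamma_r$, vanishes on $\Gamma^c$, and satisfies $M(\id)=1$, so the ratio $\P_x(\tau_\Gamma>t)/\bigl(C_1 M(x)t^{-\beta/\alpha}\bigr)$ is well-defined for $x\in\Gamma$ and tends to $1$ in the indicated regime.
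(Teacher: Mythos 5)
Your argument is correct and is exactly the route the paper takes: the corollary is derived from Theorem~\ref{T:survival} together with the scaling identity $\P_x(\tau_\Gamma>t)=\P_{t^{-1/\alpha}x}(\tau_{t^{-1/\alpha}\Gamma}>1)$, the cone invariance $t^{-1/\alpha}\Gamma=\Gamma$, and the $\beta$-homogeneity \eqref{eq:hoM} of $M$. Your remarks on uniformity and on $M>0$ in $\Gamma$ are consistent with the paper, which treats the statement as an immediate scaling consequence and gives no separate proof.
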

Another consequence of Theorem~\ref{T:survival} is the following theorem, which is the main result of the paper.
\begin{thm}\label{mainresult}
The following limit exits,
  \begin{equation}
    \label{e:blimit}
    n_t(y) = \lim_{\Gamma \ni x \to 0} \frac{p^{\Gamma}_t(x,
      y)}{\P_x(\tau_{\Gamma} > 1)}\,, \quad (t, y) \in (0, \infty) \times \Gamma\,.
  \end{equation}
It is a finite strictly positive jointly continuous function of $t$ and $y$, and we have
\begin{equation}\label{QSdensity}
\mu(A)=\int_A n_1(z)\;\rd z\,,\quad A\subset \Gamma\,.
\end{equation}
Furthermore, for $0<s,t<\infty$, $y \in \Gamma$,
  \begin{align}
    \label{e:scaling}
    n_t(y) &= t^{-(d + \beta)/\alpha}
    n_1(t^{-1/\alpha}y)\,,\\
 \label{e:mubound}
  n_1(y) &\approx \frac{\P_y(\tau_{\Gamma} > 1)}{(1 + |y|)^{d +
      \alpha}}\,, \\
    \label{e:ck}
   n_{t+s}(y)  &= \int_{\Gamma}n_t(z)p^{\Gamma}_s(z, y) \rd z\,.
  \end{align}
\end{thm}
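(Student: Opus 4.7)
The plan is to reduce the statement, via scaling, to a single time $t=1$, obtain the limit at $t=1$ by a Prokhorov compactness argument based on an approximate factorization of $p^\Gamma_1$, and then extend to all $t>0$ by scaling and Chapman--Kolmogorov. Using \eqref{eq:scD}, the $\beta$-homogeneity \eqref{eq:hoM} of $M$, and Theorem~\ref{T:survival}, one has
$$\frac{p^\Gamma_t(x,y)}{\P_x(\tau_\Gamma>1)}=\frac{p^\Gamma_t(x,y)/M(x)}{\P_x(\tau_\Gamma>1)/M(x)}\,,\qquad \frac{p^\Gamma_t(x,y)}{M(x)}=t^{-(d+\beta)/\alpha}\,\frac{p^\Gamma_1(x',y')}{M(x')}\,,$$
where $x'=t^{-1/\alpha}x\to 0$ as $\Gamma\ni x\to 0$ and $y'=t^{-1/\alpha}y$. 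Thus existence of \eqref{e:blimit} for all $(t,y)$ reduces to its existence at $t=1$, and \eqref{e:scaling} is automatic.

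\smallskip

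For the limit at $t=1$, I would view $y\mapsto p^\Gamma_1(x,y)/\P_x(\tau_\Gamma>1)$ as a probability density on $\Gamma$ indexed by $x$. Using the approximate factorization of the Dirichlet heat kernel mentioned in the abstract together with \eqref{eq:ppfe} and \eqref{eq:**}, I would establish an upper bound of the form $p^\Gamma_1(x,y)\le C\,\P_x(\tau_\Gamma>1)\,h(y)$ with $h$ integrable on $\Gamma$, which simultaneously gives tightness of the family and the two-sided estimate \eqref{e:mubound}. Prokhorov's theorem then extracts, from any sequence $\Gamma\ni x_k\to 0$, a subsequential weak limit $\mu$ with a (strictly positive, continuous) density $n_1$, by the regularity of $p^\Gamma_1$. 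I would define $n_t$ through \eqref{e:scaling}; joint continuity of $(t,y)\mapsto n_t(y)$, and the Chapman--Kolmogorov identity \eqref{e:ck}, follow by passing to the limit in~\eqref{eq:CKpG} using dominated convergence with the same $h$-bound.

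\smallskip

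Given existence of $n_1$, the identification $\mu(A)=\int_A n_1(z)\,\rd z$ and Theorem~\ref{thm:eYl} follow from the scaling change of variables
$$\P_x\!\left(\frac{X_t}{t^{1/\alpha}}\in A\;\Big|\;\tau_\Gamma>t\right)=\frac{\int_A p^\Gamma_1(x/t^{1/\alpha},w)\,\rd w}{\P_{x/t^{1/\alpha}}(\tau_\Gamma>1)}\,,$$
by sending $t\to\infty$ so that $x/t^{1/\alpha}\to 0$ in $\Gamma$; \eqref{e:blimit} at $t=1$ then gives the claim, and \eqref{def1} follows by taking $x=\id$.

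\smallskip

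The main obstacle I expect is \emph{uniqueness} of the Prokhorov subsequential limit. My approach would be to pass to the limit in $G_\Gamma(x,y)/M(x)\to C_0K(y)$ (see \eqref{e:martin}), justified by dominated convergence via the factorization bound, to obtain the identity $\int_0^\infty n_t(y)\,\rd t=C_0K(y)/C_1$; then combine this with \eqref{e:ck} and the forced self-similarity \eqref{e:scaling} to recognise $\{n_t\}_{t>0}$ as a self-similar entrance law for the killed stable process in $\Gamma$ from the vertex $0$. A uniqueness argument for such entrance laws, together with the normalization $\int_\Gamma n_1\,\rd y=1$ (preserved under weak convergence by tightness), eliminates the dependence on the subsequence and upgrades the subsequential convergence to the full pointwise limit \eqref{e:blimit}.
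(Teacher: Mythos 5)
There is a genuine gap, and it sits exactly at the decisive point of the theorem: uniqueness of the subsequential Prokhorov limit. Your skeleton up to that point coincides with the paper's (tightness of $\mu_x(\rd y)=p^\Gamma_1(x,y)\,\rd y/\P_x(\tau_\Gamma>1)$ via the factorization \eqref{e:p1Gamma} and \eqref{e:p1xy}, which also gives \eqref{e:mubound}; scaling to pass from one time to all $t$). But your uniqueness plan --- derive $\int_0^\infty n_t(y)\,\rd t=C_0K(y)/C_1$ and then invoke ``a uniqueness argument for such entrance laws'' --- does not close. First, the potential identity is far too weak to pin down $n_1$: combined with the self-similarity \eqref{e:scaling}, the substitution $t=u^{-\alpha}$ shows that it only fixes, for each direction $\omega\in\Gamma\cap\S_{d-1}$, the single Mellin-type moment $\int_0^\infty v^{\,d+\beta-\alpha-1}\,n_1(v\omega)\,\rd v$, which is exactly what the homogeneity of $K$ already encodes; infinitely many self-similar entrance laws share this potential. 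Second, no uniqueness theorem for $\beta$-self-similar entrance laws of the killed semigroup $P^\Gamma_t$ is cited or proved, and establishing one is essentially of the same difficulty as the theorem itself. The paper resolves uniqueness concretely and differently: for $\phi\in C_c^\infty(\Gamma)$ put $u_\phi=-\Delta^{\alpha/2}\phi$, so $\phi=G_\Gamma u_\phi$ and $P_1^\Gamma\phi=G_\Gamma P_1^\Gamma u_\phi$; then Lemma~\ref{T:Greenpotential} (asymptotics of Green potentials at the vertex) together with Theorem~\ref{T:survival} yields the explicit, subsequence-free limit $\lim_{\Gamma\ni x\to0}\mu_x(\phi)=\int_\Gamma K\,P_1^\Gamma u_\phi\,\rd y\big/\int_\Gamma K\,P_1^\Gamma\kappa_\Gamma\,\rd y$, which identifies the weak limit $\mu$ uniquely. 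You need this identification (or an equivalent one) for your argument to go through.

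A secondary, smaller issue: weak convergence $\mu_{x_k}\Rightarrow\mu$ does not by itself give pointwise convergence of the densities $p^\Gamma_1(x_k,\cdot)/\P_{x_k}(\tau_\Gamma>1)$, so ``a weak limit with a strictly positive continuous density $n_1$, by the regularity of $p^\Gamma_1$'' needs an actual equicontinuity argument at $t=1$. The paper sidesteps this by first proving full (not subsequential) weak convergence, then testing $\mu_x$ against $\phi_y=p^\Gamma_1(\cdot,y)$, a bounded continuous function, to obtain the pointwise limit at $t=2$ via Chapman--Kolmogorov, and finally scaling to all $t$; the identification $\mu(\rd y)=n_1(y)\,\rd y$ and \eqref{e:ck} then follow by dominated convergence with the bound \eqref{e:pGamma1}, much as you propose. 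So your reduction to $t=1$ is repairable, but as written it glosses over the passage from weak convergence of measures to pointwise convergence of densities.
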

In view of \eqref{e:ck}, $n_t(y)\rd y$ defines an entrance law of
excursions {surviving at least time $t$}
from $0$ into $\Gamma$, cf. Rivero and Haas \cite{MR2971725}, Blumenthal \cite[page 104]{MR1138461} and
Ba{\~n}uelos et al. \cite{MR1042064}.

We note that Yano \cite{MR2603019} studies excursions of symmetric L\'evy processes into $\R\setminus\{0\}$, a situation not discusses in this paper. We however note that in our situation 
$$\int_\Gamma n_t(x)\rd x=t^{-\beta/\alpha},\ \quad t>0\ ,$$ which nicely corresponds with \cite[Example~1.1]{MR2603019}, because for $\Gamma=\R\setminus\{0\}$ and $\alpha\in (1,2)$, $\beta=\alpha-1$, see \cite{MR2075671}. 
\begin{example}
If $d=1$, $\Gamma$ is the half-line $(0,\infty)$, 
then $M(x)=x^{\alpha/2}$ for $x>0$ \cite[Example~3.2]{MR2075671}.
By \cite[Theorem~2]{MR2722789},
$$
\P_x(\tau_\Gamma>1)\approx x^{\alpha/2}\wedge 1 , \quad x>0.
$$ 
By \eqref{e:mubound},
\begin{equation*} 
n_1(x)\approx x^{\alpha/2} \wedge x^{-d-\alpha}, \quad x>0.
\end{equation*}
Therefore by \eqref{e:scaling},
\begin{equation}\label{eq:onthl}
n_t(x)\approx (x^{\alpha/2}t^{-1-1/\alpha})\wedge (x^{-1-\alpha}t^{1/2}), \quad t,x>0.
\end{equation}
The first expression gives the minimum if $x^\alpha\leq t$ (small space), and the second  -- if $x^\alpha>t$ (short time).
Estimates of $n_t(x)$ for half-spaces in $\Rd$ may be obtained in a similar way. 
 \end{example}

Some of the other objects we study can also be expressed in terms of
$n$. 
Namely we have
    \begin{equation}
      \label{e:KC0C1}
      \begin{aligned}
      K(y) = & \lim_{\Gamma \ni x \to 0}
      \frac{G_{\Gamma}(x,y)}{\P_x(\tau_{\Gamma}>1)}
      \frac{\P_x(\tau_{\Gamma}>1)}{M(x)}
      \frac{M(x)}{G_{\Gamma}(x,\id)} \\
      = & \frac{C_1}{C_0} \lim_{\Gamma
        \ni x \to 0} \int_0^{\infty}
      \frac{p^{\Gamma}_t(x,y)}{\P_x(\tau_{\Gamma}>1)} \rd t \\
      = & \frac{C_1}{C_0} \int_0^{\infty} n_t(y) \rd t\,.
    \end{aligned}
  \end{equation}
Therefore,
    \begin{equation}
      \label{e:C1C0}
      \frac{C_0}{C_1} = \int_0^{\infty} n_t(\id) \rd t
    \end{equation}
may be interpreted as the expected amount of time spent at ${\bf 1}$ by the excursion from the vertex into $\Gamma$.

We %like to
note in passing that the spatial asymptotics of the heat kernel at infinity
were given in the works of Blumenthal and Getoor \cite{MR0119247, MR0264757}
(see also \cite{2015arXiv150408358C}), who showed that
$p_t(x)\sim
t\nu(x)$ as
$t|x|^{-\alpha}\to 0$.
More results of this type for unimodal L\'evy processes can be found in recent works of Tomasz Grzywny et al., including \cite{2015arXiv150408358C}, however, the above papers only concern $\Gamma=\Rd$.

Our approach to Theorem~\ref{mainresult} depends on three properties. First, the scaling
(\ref{eq:sca}) yields
\begin{equation}\label{repr1}
\P_x \left(  \tau_{\Gamma} > t,\frac{X_t}{t^{1/\alpha}} \in A \right)= \P_{t^{-1/\alpha} x} \left(\tau_{\Gamma} > 1, X_1 \in A\right)
\end{equation}
and
\begin{equation}\label{repr2}
\P_x \left(  \tau_{\Gamma} > t\right)= \P_{t^{-1/\alpha} x} \left(\tau_{\Gamma} > 1\right)\;.
\end{equation}
Then the Ikeda-Watanabe formula (\ref{eq:IW}) gives the representation
\begin{equation}\label{lem:rsp}
\P_x (\tau_{\Gamma} > 1)=G_\Gamma P^\Gamma_1 \kappa_\Gamma(x)\,.
\end{equation}
Recall that $\kappa_\Gamma(x)$ may be considered as the killing intensity because it is the intensity of
jumps of $X$ from $x$ to $\Gamma^c$. Similarly, $P^\Gamma_1
\kappa_\Gamma(x)$ may be interpreted as the intensity of killing
precisely
one unit of time from now.

To actually prove the existence of $n_t$ in (\ref{e:blimit}),
we use the asymptotics of Green potentials at the vertex $0$.
\begin{lemm}
  \label{T:Greenpotential}
If $f$ is a measurable function on $\Gamma$, bounded on $\Gamma_1$ and $G_{\Gamma} |f| (\id) < \infty$, then
\begin{equation}
  \label{e:Greenpotential}
    \lim_{\Gamma \ni x \to 0} \frac{G_{\Gamma} f(x)}{M(x)} = C_0
    \int_{\Gamma} K(y) f(y) \rd y<\infty\,.
  \end{equation}
\end{lemm}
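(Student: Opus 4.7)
I separate the integral over $\Gamma$ into a piece near the cone's vertex and a piece away from it. Since $G_{\Gamma}(x,\id)/M(x)\to C_0$ by \eqref{eq:C0}, it suffices to prove that
\[
\lim_{\Gamma \ni x \to 0} \frac{G_\Gamma f(x)}{G_\Gamma(x,\id)} = \int_\Gamma K(y) f(y)\, \rd y.
\]
Fixing $r\in(0,1/2)$ (to be sent to $0$ later), I write $G_\Gamma f(x)/G_\Gamma(x,\id) = A_r(x)+B_r(x)$, where $A_r$ and $B_r$ are the analogous integrals over $\Gamma\setminus\Gamma_r$ and $\Gamma_r$, respectively.

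For $A_r(x)$, I exploit that for every $|y|>r$ both $x\mapsto G_\Gamma(x,y)$ and $x\mapsto G_\Gamma(x,\id)$ are regular $\alpha$-harmonic on $\Gamma_{r/2}$ and vanish on $\Gamma^c\cap B_{r/2}$ (note $|\id|=1>r/2$). Applying \eqref{e:BHP} to this pair and letting an auxiliary base point tend to $0$ along $\Gamma$ yields, in view of \eqref{e:martin}, the uniform domination
\[
\frac{G_\Gamma(x,y)}{G_\Gamma(x,\id)}\leq C K(y), \quad x\in \Gamma_{r/4},\ |y|>r,
\]
for a constant $C=C(\Gamma,\alpha)$ independent of $r$. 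Combined with the pointwise limit from \eqref{e:martin}, dominated convergence gives $\lim_{x\to 0} A_r(x) = \int_{\Gamma\setminus\Gamma_r} K(y) f(y)\, \rd y$, provided $K|f|$ is integrable on $\Gamma\setminus\Gamma_r$. That integrability I extract from the hypothesis $G_\Gamma|f|(\id)<\infty$: on $\Gamma\setminus\Gamma_r$, a further application of \eqref{e:BHP} plus interior Harnack comparison of the positive harmonic functions $K(\cdot)$ and $G_\Gamma(\id,\cdot)$ (the latter away from its pole $\id$; the contribution near $\id$ is handled by the local boundedness of $f$ on $\Gamma_1$) yields $K(y)\leq c_r\, G_\Gamma(\id,y)$ on that region.

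For $B_r(x)$, I use $|f|\leq \|f\|_{L^\infty(\Gamma_1)}$ on $\Gamma_r$ together with scaling. With $\hat x = x/|x|$, integrating \eqref{eq:scD} in $t$ and invoking the $\beta$-homogeneity \eqref{eq:hoM} of $M$ gives
\[
\frac{1}{M(x)}\int_{\Gamma_r} G_\Gamma(x,y)\, \rd y = \frac{|x|^{\alpha-\beta}}{M(\hat x)}\int_{\Gamma\cap B_{r/|x|}} G_\Gamma(\hat x,z)\, \rd z.
\]
Sharp Green-function estimates (e.g., \cite{MR1991120}) together with $0<\beta<\alpha$ show the right-hand side is $O(r^{\alpha-\beta})$, uniformly in $x$ with $|x|<r/2$, so that $B_r(x)\to 0$ as $r\to 0$, uniformly in such $x$. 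A parallel polar-coordinate calculation using \eqref{e:kelvin} shows $\int_{\Gamma_r} K|f|\, \rd y = O(r^{\alpha-\beta})$ as well.

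Passing $\Gamma\ni x\to 0$ first (for fixed $r$) and then $r\to 0$ combines the two pieces into $\int_\Gamma K(y) f(y)\, \rd y<\infty$, proving the lemma. The main obstacle is the uniform-in-$x$ smallness of $B_r(x)$ as $r\to 0$; it forces a careful use of scaling together with the strict inequality $\beta<\alpha$, exactly so that the tail $\int_{|z|\geq 1} G_\Gamma(\hat x,z)\, \rd z$ is finite. Establishing the $L^1$-integrability of $K|f|$ on $\Gamma\setminus\Gamma_r$ from the hypothesis $G_\Gamma|f|(\id)<\infty$ is the other principal technical step.
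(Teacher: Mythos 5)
Your decomposition into $\Gamma_r$ and $\Gamma\setminus\Gamma_r$ is the same as the paper's, and your treatment of the far piece $A_r$ via \eqref{e:BHP} plus dominated convergence is in the right spirit. The paper's route to integrability of $K|f|$ on $\Gamma\setminus\Gamma_\delta$ is however considerably cleaner than yours: they pick $x_1\in\Gamma_{\delta/2}$ with $G_\Gamma|f|(x_1)<\infty$ and use \eqref{e:BHP} to dominate $G_\Gamma(x,y)/G_\Gamma(x,\id)$ by $c_1\,G_\Gamma(x_1,y)/G_\Gamma(x_1,\id)$, so the dominating function is directly the integrable $G_\Gamma(x_1,\cdot)|f|$. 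Your variant, comparing $K$ with $G_\Gamma(\id,\cdot)$ on $\Gamma\setminus\Gamma_r$ away from $\id$, can be pushed through, but it needs a boundary Harnack comparison at boundary points other than the vertex and a Harnack chain across the cone, plus a separate argument at infinity; you assert this in one sentence without giving it.

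On the near piece $B_r$ there is a genuine error. You claim the strict inequality $\beta<\alpha$ makes the tail $\int_{|z|\ge 1}G_\Gamma(\hat x,z)\,\rd z$ finite. It does not: that integral is $\mathbb{E}_{\hat x}\tau_\Gamma$ up to a bounded piece, and by Corollary~\ref{assurv} one has $\P_{\hat x}(\tau_\Gamma>t)\sim C_1 M(\hat x)\,t^{-\beta/\alpha}$ with $\beta/\alpha<1$, so $\mathbb{E}_{\hat x}\tau_\Gamma=\infty$. (In one dimension with $\alpha=1$, $\Gamma=(0,\infty)$, the explicit formula \eqref{e:GGamma1} gives $G_\Gamma(1,y)\sim\frac{2}{\pi}y^{-1/2}$, whose tail integral diverges.) What $\beta<\alpha$ actually buys is the growth rate $\int_{\Gamma\cap B_R}G_\Gamma(\hat x,z)\,\rd z\lesssim M(\hat x)\,R^{\alpha-\beta}$, and it is this polynomial growth, multiplied by the prefactor $|x|^{\alpha-\beta}/M(\hat x)$, that produces $O(r^{\alpha-\beta})$. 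Notice also that your scaling by $|x|$ (rather than by $r$) places $\hat x=x/|x|$ on the unit sphere and so introduces the factor $1/M(\hat x)$, which blows up as $\hat x$ approaches $\partial\Gamma\cap\S_{d-1}$; you must therefore prove a matching $M(\hat x)$ factor in the Green integral bound, uniformly in $\hat x$, which is not just \eqref{e:BHP} at the vertex. The paper sidesteps both difficulties by scaling by $\delta$, so the rescaled starting point $\delta^{-1}x$ stays in $\Gamma_1$, and then bounding $\int_{\Gamma_1}G_{\Gamma_2}(\delta^{-1}x,z)\,\rd z$ by $c_5\,G_\Gamma(\delta^{-1}x,\id)$ via the Ikeda--Watanabe formula \eqref{eq:IW}, together with the harmonic-majorant inequality \eqref{eq:ubf} coming from \eqref{e:GGamma2}. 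I recommend adopting that scaling and the Ikeda--Watanabe step rather than the Green estimates at infinity you invoke.
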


\section{Proofs}\label{sec:proofs}
\subsection{Proof of Lemma~\ref{T:Greenpotential}}\label{s:greenpotential}
If $G_{\Gamma} |f| (x) < \infty$ for some $x \in \Gamma$,
then by \cite[Lemma 5.1]{MR1825645}, $G_{\Gamma} |f|(x) <
\infty$ for almost all $x \in \R^d$.
Let $0<\delta<1$.
Choose $x_1 \in \Gamma_{
{\delta/2}}$ so that $G_{\Gamma} |f|
(x_1) < \infty$.
  By \eqref{e:BHP},
  \begin{equation}\label{eq:BHPG}
    \frac{G_{\Gamma}(x,y)}{G_{\Gamma}(x,\id)} \leq c_1
    \frac{G_{\Gamma}(x_1,y)}{G_{\Gamma}(x_1, \id)}\,,
\quad
x\,,\; y \in \Gamma\,,\quad |x| < \delta/2\,,\quad  |y| \geq \delta
  \end{equation}
for some constant $c_1$.
By \eqref{e:martin}, \eqref{eq:BHPG}, and Fatou's lemma we see that the right-hand side of (\ref{eq:BHPG}) is finite.
Observe that we also have
  \begin{equation}\label{eq:fx1}
    \int_{\Gamma \setminus \Gamma_{\delta}} G_{\Gamma}(x_1, y) |f(y)| \rd
    y \leq \int_{\Gamma} G_{\Gamma}(x_1, y) |f(y)| \rd y < \infty\,.
  \end{equation}
  By \eqref{e:martin}, {\eqref{eq:BHPG}, \eqref{eq:fx1}} and the dominated convergence theorem,
  \begin{equation}
    \label{e:Gammalarge}
    \lim_{\Gamma \ni x \to 0} \int_{\Gamma \setminus \Gamma_{\delta}}
    \frac{G_{\Gamma}(x,y)}{G_{\Gamma}(x,\id)} f(y) \rd y =
    \int_{\Gamma \setminus \Gamma_{\delta}} K(y) f(y) \rd y\,.
  \end{equation}
  % Now
  Next we consider the %integration
  integral over $\Gamma_{\delta}$. By our assumptions on $f$, a change of variables, and the scaling
property $G_{\Gamma}(\delta x, \delta y) = \delta^{-d + \alpha}
G_{\Gamma}(x, y)$ we can conclude that for some constant $c_2$,
\begin{align}
\int_{\Gamma_{\delta}} G_{\Gamma}(x, y) |f(y)| \rd y
  & \leq  c_2 \delta^d \int_{\Gamma_1} G_{\Gamma}(x, \delta z)
      \rd z
= c_2 \delta^{\alpha} \int_{\Gamma_1} G_{\Gamma}(\delta^{-1} x,
  z) \rd z\,, \quad x\in \Gamma_\delta\,.  \label{e:Gammadelta}
\end{align}
Now, by \eqref{e:BHP}, for some contant $c_3$,
\begin{equation}
  \label{e:GGammavy}
  \frac{G_{\Gamma}(v, y)}{M(y)} \leq c_3 \frac{G_{\Gamma}(v,
    \id)}{M(\id)} = c_3 G_{\Gamma}(v, \id), \quad v \in \Gamma
  \setminus \Gamma_2\,, \; y \in \Gamma_1\,.
\end{equation}
Indeed, by the symmetry of $G_\Gamma$, the regular harmonicity of $y\mapsto G_{\Gamma}(v, y)$ {on $\Gamma_{1}$} follows from
\eqref{e:GGamma2}. Furthermore, the continuity of $\alpha$-harmonic
functions allows us to use $\id$ in \eqref{e:GGammavy}.
By \eqref{e:GGamma2} and \eqref{e:GGammavy} we have
\begin{align}
  G_{\Gamma}(x, y)
  = & G_{\Gamma_2}(x, y) + \E_x \left[ G_{\Gamma}(X_{\tau_{\Gamma_2}},
      y) \right]
  \leq  G_{\Gamma_2}(x, y) + c_3 \E_x \left[
         G_{\Gamma}(X_{\tau_{\Gamma_2}}, \id) \right] M(y) \nonumber\\
  \leq & G_{\Gamma_2}(x, y) + c_3 G_{\Gamma}(x, \id) M(y)\,, \quad x\,,\; y \in \Gamma_1\,. \label{eq:ubf}
\end{align}
By identities \eqref{e:Gammadelta}, \eqref{eq:ubf} and the local boundedness of
$M$, we have
\begin{equation}
  \label{e:Gammadelta2}
  \int_{\Gamma_{\delta}} G_{\Gamma}(x, y) |f(y)| \rd y \leq c_2
  \delta^{\alpha} \int_{\Gamma_1} G_{\Gamma_2}(\delta^{-1}x, z) \rd z
  + c_3 \delta^{\alpha} G_{\Gamma}(\delta^{-1}x, \id)
\end{equation}
for every $x\in \Gamma_\delta$.
Let
\[ c_4 = \inf_{z \in \Gamma_1} \int_{\Gamma \setminus \Gamma_2}\nu(z-w)
\rd w\,. \]
Clearly, $c_4 > 0$. By the Ikeda--Watanabe formula \eqref{eq:IW} and \eqref{e:BHP}, {for $x\in \Gamma_\delta$,}
\begin{align*}
  \int_{\Gamma_1} G_{\Gamma_2}(\delta^{-1} x, z) \rd z
  \leq & c_4^{-1}
\int_{\Gamma \setminus \Gamma_2} \int_{\Gamma_1}
G_{\Gamma_2}(\delta^{-1}x, z) \frac{\sA_{d,\alpha}}{|w-z|^{d +
         \alpha}} \rd z \rd w \\
  \leq & c_4^{-1} \P_{\delta^{-1} x} \left(
         X_{\tau_{\Gamma_2}} \in \Gamma \right) \\
  \leq & c_5 G_{\Gamma}(\delta^{-1}x, \id)\,.
\end{align*}
Again by \eqref{e:BHP} we have
\[ G_{\Gamma}(\delta^{-1}x, \id) \approx M(\delta^{-1} x) =
\delta^{-\beta} M(x) \approx \delta^{-\beta} G_{\Gamma}(x, \id) \]
for $x \in \Gamma_{\delta/2}$.
In view of \eqref{e:Gammadelta2},
\begin{equation}
  \label{e:Gammasmall}
  \int_{\Gamma_{\delta}} \frac{G_{\Gamma}(x, y)}{G_{\Gamma}(x, \id)}
  |f(y)| \rd y \leq c_6 \delta^{\alpha - \beta}\,, \quad x \in \Gamma_{\delta/2}\,.
\end{equation}
From \eqref{e:Gammalarge}, {Fatou's lemma} and \eqref{e:Gammasmall} it follows that
  %\begin{equation}
  \[  \int_{\Gamma \setminus \Gamma_{\delta}} K(y) f(y) \rd y \leq
    \liminf_{\Gamma \ni x \to 0} \frac{G_{\Gamma}
      f(x)}{G_{\Gamma}(x,\id)} \leq \limsup_{\Gamma \ni x \to 0}
    \frac{G_{\Gamma} f(x)}{G_{\Gamma}(x,\id)} \leq \int_{\Gamma
      \setminus \Gamma_{\delta}} K(y) f(y) \rd y + c_6 \delta^{\alpha
      - \beta}\,. \]
  %\end{equation}
  Taking the limit in the above identity as $\delta \to 0$ and using
  the fact that $\alpha > \beta$, we establish that
  \begin{equation}
    \lim_{\Gamma \ni x \to 0} \frac{G_{\Gamma}f(x)}{G_{\Gamma}(x,\id)}
    = \int_{\Gamma} K(y) f(y) \rd y\,.
  \end{equation}
  We then apply \eqref{eq:C0}, %and complete
  which completes the proof.
\halmos

\subsection{Proof of (\ref{lem:rsp})}
We observe that  by the Lipschitz condition and Sztonyk \cite[Theorem~1]{MR1825650} we have
$\P_x \left( X_{\tau_{\Gamma}} \in \partial \Gamma \right)=0$ for every $x \in \Gamma$. Thus,
\begin{equation}
\label{e:jumpexit}
\P_x \left( X_{\tau_{\Gamma} -} = X_{\tau_{\Gamma}} \right) = 0\,.
\end{equation}
Now for $x\in \Gamma^c$ we have $\tau_\Gamma=0$ $\P^x$-a.s. and (\ref{lem:rsp}) holds true.
To prove (\ref{lem:rsp}) for $x\in \Gamma$, observe that by \eqref{eq:IW} and \eqref{e:jumpexit},
\begin{align*}
  \label{e:survival}
  \P_x (\tau_{\Gamma} > 1)= & \int_1^{\infty} \int_{\Gamma} p^{\Gamma}_s(x,z)
  \kappa_{\Gamma}(z) \rd z \rd s \\
  = & \int_0^{\infty} \int_{\Gamma} \int_{\Gamma}
    p^{\Gamma}_s(x, w) p^{\Gamma}_1(w, z) \rd w \,
  \kappa_{\Gamma}(z) \rd z \rd s\\
 = &G_\Gamma P^\Gamma_1 \kappa_\Gamma(x)\,.
\end{align*}
\halmos

\subsection{Proof of Theorem \ref{T:survival}}
\label{s:exit}

By \cite{MR1490808} we have $G_\Gamma(x,w)>0$ for all $x,w\in \Gamma$.
Thus $P^\Gamma_1 \kappa_\Gamma$ is finite almost everywhere. Furthermore,
the following estimate holds:
\begin{equation}
P^\Gamma_1 \kappa_\Gamma(x) \approx \P_x
  (\tau_{\Gamma} > 1)\quad \text{for }x \in \Gamma_1\,.  \label{L:fGamma}
\end{equation}
Indeed, if $|x| \leq 1$, then by \eqref{eq:ppfe},
\begin{equation}
  \label{e:p1xy}
p_1(x, y) \approx 1 \wedge |x-y|^{-d - \alpha} \approx ( 1 +
  |y|)^{-d - \alpha}\,.
\end{equation}
Furthermore, by \cite{MR2602155} or
\cite[Theorem 2]{MR2722789}, the following approximate factorization holds
  \begin{equation}
    \label{e:p1Gamma}
p^{\Gamma}_1(x,y) \approx \P_x (\tau_{\Gamma} > 1)
  \P_y(\tau_{\Gamma} > 1) p_1(x,y)\,, \quad x\,, \;y \in \Gamma\,.
\end{equation}
Thus,
  \begin{equation}\label{integralfinite}
  P^\Gamma_1 \kappa_\Gamma(x) \approx \P_x(\tau_{\Gamma} > 1) \int_{\Gamma}
  \P_y(\tau_{\Gamma} > 1) \left( 1 + |y| \right)^{-d - \alpha}
  \kappa_{\Gamma}(y) \rd y, \quad {x\in \Gamma_1}\,, \end{equation}
which finishes the proof of (\ref{L:fGamma}).
Note also that the integral on the right-hand side  of (\ref{integralfinite}) is finite.
This allows us to establish the
asymptotic behavior of $\P_x(\tau_{\Gamma} > 1)$.
The estimate \eqref{L:fGamma} ensures that $f(x)=P^\Gamma_1 \kappa_\Gamma(x)$ satisfies the
  assumptions of Lemma~\ref{T:Greenpotential}. Thus
  \eqref{e:survival1} is a direct consequence of
the representation \eqref{lem:rsp} and the identity \eqref{e:Greenpotential}.
This completes the proof.
\halmos

\subsection{Proof of Theorem \ref{mainresult}}
Consider the family
of measures $\{\mu_x:\, x \in \Gamma_1\}$ defined by
\begin{equation}
  \label{e:mux}
  \mu_x (A) = \frac{\int_A p^{\Gamma}_1(x, y) \rd
  y}{\P_x(\tau_{\Gamma} > 1)}\,, \quad x\in \Gamma\,,\;  A\subset \Rd\,.
\end{equation}
We start %from
by proving that the above family of measures
is tight.
Indeed, by \eqref{e:p1Gamma} and \eqref{e:p1xy} we can bound the
density of $\mu_x$ by the integrable function:
  \begin{equation}
    \label{e:pGamma1}
\frac{p^{\Gamma}_1(x, y)}{\P_x(\tau_{\Gamma} > 1)} \approx
\P_y(\tau_{\Gamma} > 1) (1 + |y|)^{-d - \alpha}\,,
\quad x \in \Gamma_1\,,\; y \in \R^d\,.
\end{equation}

We will prove now that the measures $\mu_x$ converge weakly to a probability measure $\mu$ on $\Gamma$
as $\Gamma\ni x\to 0$:
\begin{equation}\label{T:yaglom}
\mu_x\Rightarrow \mu\qquad \text{as $\Gamma\ni x\to 0$}\,.
\end{equation}
To prove \eqref{T:yaglom}, we consider an arbitrary sequence $\{x_n\}$ such that $\Gamma_1
\ni x_n \to 0$. By the tightness of the family of measures $\{\mu_x: x \in \Gamma_1\}$ there
exists a subsequence $\{x_{n_k}\}$ such that $\mu_{x_{n_k}}$ converge weakly to a probability measure $\mu$ as $k
\to \infty$.

Let $\phi \in C_c^{\infty}(\Gamma)$ and $u_{\phi} = -
\Delta^{\alpha/2} \phi$. The function $u_{\phi}$ is bounded and continuous and
$G_{\Gamma} u_{\phi} (x) = \phi(x)$ for $x \in
\R^d$, see \cite[Eq. (19)]{MR2892584} and \cite[Eq. (11)]{MR2365478} for
more details. By \eqref{eq:fL} we have $| u_{\phi}(x)| \leq c_1 p_1(x, 0)$. Then,
\begin{equation}\label{eq:p2}
P_1^{\Gamma} |u_{\phi}|(x)\le c_1 p_2(x)\,,
\end{equation}
 and for
every $x \in \Gamma$,
\[ G_{\Gamma} P^{\Gamma}_1 |u_{\phi}| (x) \leq \int_{\R^d} \int_{\R^d}
G_{\Gamma}(x, y) p_1(y, z) |u_{\phi}(z)| \rd z \rd y \leq c_1
\int_{\R^d} G_{\Gamma}(x, y) p_2(y, 0) \rd y < \infty\,, \]
see  \cite[Eq. (74)]{MR2365478}.
By Fubini's theorem,
\begin{equation}
  \label{e:GP1}
  G_{\Gamma} P_1^{\Gamma} u_{\phi} (x) = P_1^{\Gamma} G_{\Gamma}
  u_{\phi} (x) = P_1^{\Gamma} \phi(x)\,.
\end{equation}
It follows from Lemma~\ref{T:Greenpotential} and \eqref{eq:p2} that
\[ \lim_{\Gamma \ni x \to 0} \frac{P_1^{\Gamma} \phi(x)}{M(x)} =
\lim_{\Gamma \ni x \to 0} \frac{G_{\Gamma} P_1^{\Gamma}
  u_{\phi}(x)}{M(x)} = C_0 \int_{\Gamma} K(y) P_1^{\Gamma} u_{\phi}
(y) \rd y\,. \]
Denoting $\mu_x(\phi)=\int_\Gamma \phi(y)\;\mu_x(\rd y)$ and applying Theorem~\ref{T:survival}
we get a finite limit
\[ \lim_{\Gamma \ni x \to 0} \mu_x(\phi) = \lim_{\Gamma \ni x \to 0}
\frac{P_1^{\Gamma} \phi(x)}{\P_x(\tau_{\Gamma} > 1)} =
\frac{\int_{\Gamma} K(y) P_1^{\Gamma} u_{\phi} (y) \rd
  y}{\int_{\Gamma} K(y) P_1^{\Gamma} \kappa_{\Gamma}(y) \rd y}\,. \]
In particular, 
$\mu(\phi) = \lim_{k \to \infty} \mu_{x_{n_k}}(\phi)$
does not depend on the choice of the subsequence $\{x_{n_k}\}$.
Thus, $\mu_x$ weakly converges to this $\mu$ as $\Gamma \ni x \to 0$.

We are now in a position to prove that $n_1$ is the density of the Yaglom limit $\mu$ appearing in \eqref{T:yaglom} and that $n_t$ is well-defined.
By the Chapman-Kolmogorov equation applied to
$\phi_y(\cdot) = p^{\Gamma}_1(\cdot, y)\in C_0(\Rd)$,
  \begin{equation}
    \label{e:p2Gamma}
  p^{\Gamma}_2(x, y) = \int_{\Gamma} p^{\Gamma}_1(x, z)
p^{\Gamma}_1(z, y) \rd z = P^{\Gamma}_1 \phi_y (x)\,, \quad x\,,\; y \in
\Gamma\,.
\end{equation}
Thus, for all $y \in \Gamma$,
\begin{equation}
  \label{e:p2}
  \frac{p^{\Gamma}_2(x, y)}{\P_x(\tau_{\Gamma}>1)} = \frac{P^{\Gamma}_1\phi_y(x)}{\P_x(\tau_{\Gamma} > 1)} =
   \mu_x(\phi_y) \to
  \mu(\phi_y)<\infty\,,
\end{equation}
as $\Gamma\ni x\to 0$, see Theorem~\ref{T:survival} and \eqref{T:yaglom}.
This proves the existence of the limit $n_t$ defined in \eqref{e:blimit} for $t=2$.
Using the existence of this limit, the scaling property and Theorem~\ref{T:survival} we can conclude now that
for any $(t, y) \in (0, \infty) \times \Gamma$ the following holds true
\begin{equation}
  \label{e:nty}
  \begin{aligned}
  n_t(y) = & \lim_{\Gamma \ni x \to 0} \frac{p^{\Gamma}_t(x,
             y)}{\P_x(\tau_{\Gamma} > 1)} \\
  = &   (t/2)^{-d/\alpha}
  \lim_{\Gamma \ni x \to 0} \frac{ p^{\Gamma}_2
  \left( (t/2)^{-1/\alpha} x, (t/2)^{-1/\alpha}
      y \right)}{\P_{ \left( t / 2 \right)^{- 1/\alpha}
        x}(\tau_{\Gamma}> 1) } \lim_{\Gamma \ni x \to 0}
  \frac{\P_x(\tau_{\Gamma} > t/2)}{\P_x(\tau_{\Gamma} > 1)} \\
  = &   (t/2)^{-(d+\beta)/\alpha} n_2((t/2)^{-1/\alpha}y)\,.
\end{aligned}
\end{equation}
This proves the existence of the limit $n_t(y)$ for general $t>0$, and the equation \eqref{e:scaling}.
By \eqref{e:pGamma1} we get
\eqref{e:mubound}.
By the weak convergence \eqref{T:yaglom}, Theorem~\ref{T:survival},
and the dominated convergence theorem, we get that for
every bounded continuous function $\phi$ on $\Gamma$ we have
\[ \mu(\phi) = \lim_{\Gamma \ni x \to 0} \frac{P^{\Gamma}_1 \phi
    (x)}{\P_x \left( \tau_{\Gamma} > 1 \right)} = \lim_{\Gamma \ni x
    \to 0}
  \int_{\Gamma} \frac{p^{\Gamma}_1(x, y)}{\P_x \left( \tau_{\Gamma} > 1 \right)} \phi(y) \rd y =
  \int_{\Gamma} n_1(y) \phi(y) \rd y\,. \]
This completes the proof of the fact that the limit $n_1(y)$ from \eqref{mainresult} is well-defined and gives the density function of the quasi-stationary measure $\mu$.
Note that \eqref{e:ck} follows directly from
the Chapman-Kolmogorov equation and the dominated convergence theorem:
\[
n_{t+s}(y) = \lim_{\Gamma \ni x \to 0} \int_{\Gamma}
      \frac{p^{\Gamma}_t(x, z)}{\P_x(\tau_{\Gamma} > 1)}
      p^{\Gamma}_s(z, y) \rd z = P^{\Gamma}_s
    n_t(y)\,.
\]
To end the proof we show that $n_t(y)$ is jointly continuous 
on $(0,\infty)\times \Gamma$. Indeed,  $p_1^\Gamma(z,y)/p_1^\Gamma(z,y_1)\approx 1$ for $z\in \Gamma$,
if $y,y_1\in \Gamma$ are close to each other.
The continuity of $n_2(y)$ follows from the dominated convergence theorem and the continuity of $p^\Gamma_1$. The joint continuity of $n_t(y)$ follows from the scaling property.
\halmos

\subsection{Relatively uniform convergence}
\begin{lemm}\label{l:ru}
If 
$0\le c^{-1}f_n\le f_m\le c f_n$ for all $m,n$, and $f=\lim f_n$, then
$\lim \int f_n \rd\eta= \int f \rd\eta$.  
\end{lemm}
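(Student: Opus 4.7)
The plan is to reduce the statement to the classical dominated convergence theorem by using $c f_1$ (or any fixed $cf_{n_0}$) as a universal majorant produced by the comparability hypothesis itself. First I would set $n=1$ in the assumed inequality to get $0\le f_m\le c f_1$ for every $m$; passing to the pointwise limit in $m$ (which is what I take $f=\lim f_n$ to mean, $\eta$-a.e.) then yields $0\le f\le cf_1$. Symmetrically, setting $m=1$ gives $c^{-1}f_1\le f_n$ for all $n$ and, in the limit, $c^{-1}f_1\le f$.

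The argument then splits into two cases according to whether $\int f_1\,\rd\eta$ is finite. In the finite case, $cf_1$ is an integrable dominating function for the whole sequence, so the dominated convergence theorem applies directly and delivers $\lim \int f_n\,\rd\eta=\int f\,\rd\eta$. In the infinite case, the reverse comparison $c^{-1}f_1\le f_n$ forces $\int f_n\,\rd\eta\ge c^{-1}\int f_1\,\rd\eta=+\infty$ for every $n$, and the same inequality in the limit gives $\int f\,\rd\eta=+\infty$; thus both sides of the claimed identity equal $+\infty$ and the conclusion still holds.

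There is no real obstacle here: the substance of the lemma is the single observation that the two-sided comparability produces, with one stroke, both an integrable majorant (whenever any one of the $f_n$ is integrable) and a lower bound that forces joint divergence otherwise. The only point to be attentive to is that the inequalities are preserved in the limit, which is immediate from the pointwise interpretation of $\lim f_n = f$.
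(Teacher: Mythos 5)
Your argument is correct and follows essentially the same route as the paper: the two-sided comparability supplies an integrable majorant so that the dominated convergence theorem applies, with the divergent case handled by the matching lower bound. The only cosmetic difference is that you split on the finiteness of $\int f_1\,\rd\eta$ and dominate by $cf_1$, whereas the paper phrases the condition in terms of $\int f\,\rd\eta$; under the hypotheses these are equivalent.
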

This is true because if the integral $ \int f \rd\eta$ is finite, then the dominated convergence theorem applies.

\subsection{Proof of Theorem \ref{thm:eYl}}
\label{s:yaglom}
By the scaling property of $X_t$ we have
\begin{align*}
  \P_x \left( \frac{X_t}{t^{1/\alpha}} \in A | \tau_{\Gamma} > t
  \right)
  = & \frac{\P_x \left(  \tau_{\Gamma} >
  t,\frac{X_t}{t^{1/\alpha}} \in A \right)}{\P_x(\tau_{\Gamma} > t)}
  =  \frac{\P_{t^{-1/\alpha} x} \left(\tau_{\Gamma} > 1, X_1 \in A
  \right)}{\P_{t^{-1/\alpha} x}(\tau_{\Gamma} > 1)} \\
  = & \frac{\int_A p^{\Gamma}_1(t^{-1/\alpha}x, y) \rd
      y}{\P_{t^{-1/\alpha} x} (\tau_{\Gamma} > 1)} %=
    % \mu_{t^{-1/\alpha} x}(A)
      \,,\quad x\in \Gamma\,, \; t>0\,.
\end{align*}
We have shown in the proof of Theorem~\ref{mainresult} that
$p_1^{\Gamma}(t^{-1/\alpha} x, y) / \P_{t^{-1/\alpha} x}
(\tau_{\Gamma} > 1)$ converges relatively uniformly to $n_1(y)$ as $t
\to \infty$, in the sense of the condition in Lemma~\ref{l:ru}, see \eqref{e:pGamma1}.
This yields the Yaglom limit $\mu(\rd x)=n_1(x)\rd x$.
\halmos

\subsection{Proof of Proposition~\ref{prop:qs}}\label{s:proofqs}
Recall that by (\ref{QSdensity}) we have $\mu(\rd z)= n_1(z)\;\rd z$.
By using   \eqref{e:ck} and 
(\ref{e:scaling}), 
for $A\subset\Rd$ we get
\begin{align*}
\mathbb{P}_\mu\left(\frac{X_t}{(t+1)^{1/\alpha}}\in A, \tau_\Gamma >t\right) =
& \int_{\Gamma} \int_{(t+1)^{1/\alpha}A} n_1(x) p^{\Gamma}_t(x,y)\rd y \rd x\\
  = & \int_{(t+1)^{1/\alpha}A} n_{t+1}(y) \rd y\\
  =& \int_{(t+1)^{1/\alpha}A} (t+1)^{-(d + \beta)/\alpha}
    n_1\left((t+1)^{-1/\alpha}y\right) \rd y\\
 = &(t+1)^{-\beta/\alpha}\int_{A} n_1(y)\rd y
 = (t+1)^{-\beta/\alpha}\mu(A)\,.
\end{align*}
In particular, $\mathbb{P}_\mu\left(\tau_\Gamma >t\right) =(t+1)^{-\beta/\alpha}$, which
ends
the proof.
\halmos

\section{Symmetric Cauchy process on half-line}\label{sec:cauchy}

  Let $d = \alpha = 1$ and $\Gamma = (0, \infty)$. Then $X_t$ is the
  symmetric Cauchy process on $\R$ and
  \[\tau_\Gamma=\inf\{t\geq 0:\, X_t <0\}\,,\]
which is sometimes called the ruin time.   
For this particular situation
we can add specific spectral information on the
Yaglom limit $\mu$.
Following \cite{MR2679702}, for $x > 0$, we let
  \begin{equation}\label{fr}
  r(x) = \frac{\sqrt{2}}{2 \pi} \int_0^{\infty} \frac{t}{(1 +
      t^2)^{5/4}} \exp \left( \frac{1}{\pi} \int_0^t \frac{\log s}{1 +
        s^2} \rd s \right) \e^{- t x} \rd t \end{equation}
  and
  \begin{equation}\label{fpsi} \psi(x) = \sin \left( x + \frac{\pi}{8} \right) - r(x)\,. \end{equation}

\begin{thm}\label{thmcauchy}
If $X_t$ is the
  symmetric Cauchy process on $\R$ and $\Gamma = (0, \infty)$, then $\mu$ has the density function
\begin{equation}
    \label{e:n1ycauchy}
    n_1(y) = \lim_{x \to 0+} \frac{p^{\Gamma}_1(x,
      y)}{\P_x(\tau_{\Gamma} > 1)} = \sqrt{\frac{\pi}{2}}
    \int_0^{\infty} \lambda^{1/2} \psi(\lambda y) \e^{- \lambda} \rd
    \lambda\,, \quad y>0\,.
  \end{equation}
\end{thm}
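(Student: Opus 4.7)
The plan rests on Kwa\'snicki's spectral decomposition of the Cauchy semigroup killed on the half-line \cite{MR2679702}, combined with our Theorem~\ref{T:survival} specialized to $d=\alpha=1$ and $\Gamma=(0,\infty)$. In this setting the Martin kernel at infinity is $M(x)=x^{1/2}$ and $\beta=1/2$, so Theorem~\ref{T:survival} gives the precise tail
\begin{equation*}
\P_x(\tau_\Gamma>1)\sim C_1\,x^{1/2},\qquad x\to 0+.
\end{equation*}

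The starting point is the spectral representation from \cite{MR2679702}, of the form
\begin{equation*}
p^\Gamma_t(x,y)\;=\;c\int_0^\infty e^{-\lambda t}\,\psi(\lambda x)\,\psi(\lambda y)\,\rd\lambda,\qquad t,x,y>0,
\end{equation*}
where $\psi$ is the generalized eigenfunction defined by \eqref{fr}--\eqref{fpsi} and $c$ is the Plancherel normalization made explicit in that paper. I would first extract the boundary behavior $\psi(u)\sim c_0\,u^{1/2}$ as $u\to 0+$ for some explicit $c_0>0$. This square-root profile is forced: $\psi_\lambda(\cdot)=\psi(\lambda\cdot)$ is an $\alpha$-harmonic eigenfunction on $\Gamma$ vanishing on $\Gamma^c$, and by the boundary Harnack principle \eqref{e:BHP} applied near the vertex, every such profile is comparable to $M(x)=x^{1/2}$. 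The constant $c_0$ is then read off from the Laplace-transform structure of $r$ in \eqref{fr}, using the normalization $\sin(\pi/8)-r(0)=0$ and comparing the next-order term with $\cos(\pi/8) - r'(0)$ regularized against the $u^{1/2}$ singularity.

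Next, I would substitute the spectral formula into the quotient and pass to the limit:
\begin{equation*}
\frac{p^\Gamma_1(x,y)}{\P_x(\tau_\Gamma>1)}\;=\;\frac{c}{C_1\,x^{1/2}}\int_0^\infty e^{-\lambda}\,\psi(\lambda x)\,\psi(\lambda y)\,\rd\lambda\;\longrightarrow\;\frac{c\,c_0}{C_1}\int_0^\infty\lambda^{1/2}\,\psi(\lambda y)\,e^{-\lambda}\,\rd\lambda.
\end{equation*}
Justifying the passage under the integral sign requires a dominating function; this is supplied by the bound $|\psi(u)|\le C(u^{1/2}\wedge 1)$ (combining the near-zero expansion with the explicit uniform bound $|\psi(u)|\le 1+|r(u)|\le 2$), together with the exponential factor $e^{-\lambda}$, which gives an integrable majorant uniformly in small $x$.

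The main obstacle is identifying the overall prefactor $c\,c_0/C_1$ with $\sqrt{\pi/2}$. Computing $c_0$ from \eqref{fr} and $C_1$ from \eqref{Cjeden} independently is feasible but unpleasant. A cleaner route is to pin down the constant indirectly by testing against a known quantity: integrating both sides in $y$ over $\Gamma$ gives $\mu(\Gamma)=1$ on the left, while on the right Fubini reduces to $\int_0^\infty\lambda^{1/2}e^{-\lambda}\bigl(\int_0^\infty\psi(\lambda y)\rd y\bigr)\rd\lambda$. Since $\int_0^\infty\psi(u)\,\rd u$ is evaluable in closed form via the Laplace representation \eqref{fr}, this single computation determines the prefactor and yields $\sqrt{\pi/2}$.
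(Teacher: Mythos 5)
Your overall skeleton (the spectral representation of $p^\Gamma_t$ from \cite{MR2679702}, the square-root boundary profile of $\psi$ at $0$, the survival asymptotics $\P_x(\tau_\Gamma>1)\sim C_1 x^{1/2}$, and dominated convergence in $\lambda$) is the same as the paper's, but two of your steps do not go through as stated. First, you cannot get the profile $\psi(u)\sim c_0u^{1/2}$ from \eqref{e:BHP}: $\psi_\lambda$ is an eigenfunction of the killed semigroup, i.e.\ it satisfies $\Delta^{1/2}\psi_\lambda=-\lambda\psi_\lambda$ on $\Gamma$, so it is not ($\alpha$-)harmonic, it is not nonnegative (it oscillates), and in any case BHP only yields two-sided comparability with $M$, never the exact constant $c_0$ that you need. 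The correct route is the one you mention only in passing: differentiate the Laplace representation \eqref{fr}, show $r'(s)\sim -\tfrac{1}{\sqrt{2\pi}}s^{-1/2}$ as $s\to0+$, and integrate to obtain a \emph{quantitative} expansion of the form $\bigl|\psi(u)-\sqrt{2/\pi}\,u^{1/2}\bigr|\le cu$; this is exactly what the paper does in \eqref{e:rxr0}--\eqref{e:psix0}, and it is also what legitimizes your majorant $|\psi(u)|\le C u^{1/2}$ in the dominated convergence step.

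Second, and more seriously, your device for identifying the prefactor by integrating in $y$ and using $\mu(\Gamma)=1$ breaks down: $\psi(u)=\sin(u+\pi/8)-r(u)$ with $r(u)\to0$, so $\psi$ oscillates with amplitude of order one at infinity, $\int_0^\infty|\psi(u)|\,\rd u=\infty$ and even the improper integral $\int_0^\infty\psi(u)\,\rd u$ fails to converge. Hence the double integral $\int_0^\infty\!\int_0^\infty\lambda^{1/2}|\psi(\lambda y)|\e^{-\lambda}\,\rd y\,\rd\lambda$ is infinite, Fubini is unavailable, and there is no closed-form inner integral to ``read off''; any Abel-type regularization would itself require justification you have not supplied. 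The paper avoids this entirely by computing all constants explicitly and locally at the vertex: $C_0=\lim_{x\to0+}G_\Gamma(x,\id)/M(x)=2/\pi$ from Riesz's formula \eqref{e:GGamma1}, and $\P_x(\tau_\Gamma>1)\sim\tfrac{2}{\pi}x^{1/2}$ (so $C_1=C_0$) from the explicit law of $\tau_\Gamma$ in \cite[Theorem~5]{MR2679702} together with the tail $\xi(t)\sim\tfrac1\pi t^{-3/2}$; the prefactor in \eqref{e:n1ycauchy} then drops out of the quotient $p^\Gamma_1(x,y)/\P_x(\tau_\Gamma>1)$ directly, with no global integration of $\psi$. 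To repair your argument you must replace the normalization trick by this explicit evaluation of $C_1$ (or of $\lim_{x\to0+}x^{-1/2}\P_x(\tau_\Gamma>1)$), and replace the BHP claim by the quantitative expansion of $\psi$ near $0$; with those two substitutions your outline coincides with the paper's proof.
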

\begin{proof}
  By \cite[Example~3.2 and  3.4]{MR2075671}, we have
  \[ M(x) = \left( x \vee 0 \right)^{1/2}, \quad K(x) = \left( x
      \vee 0 \right)^{-1/2}\,, \quad x \in \R\,. \]

The Green function $G_{\Gamma}(x, y)$ is given by the well-known
explicit Riesz's formula:
\begin{equation}
  \label{e:GGamma1}
  G_{\Gamma}(x, y) = \frac{1}{\pi} \arcsin \sqrt{\frac{4 x y}{(x - y)^2}}\,,
  \quad x\,, \; y > 0\,;
\end{equation}
see \cite{MR0126885} or \cite[Theorem~3.3]{MR2506430} with $m
= 0$. Thus the constant $C_0$ defined in (\ref{eq:C0}) is
given by
\begin{equation}
  \label{e:C0}
  C_0 = \lim_{x \to 0 +} \frac{G_{\Gamma}(x, 1)}{M(x)} =
  \frac{2}{\pi}\,.
\end{equation}
For $t > 0$ we define
  \[ \xi(t) = \frac{1}{\pi} \frac{t}{(1+t^2)^{5/4}} \exp \left( -
      \frac{1}{\pi} \int_0^t \frac{\log s}{1 + s^2} \rd s \right)\,. \]
  Note that $\int_0^{\infty} \log s / (1 + s^2) \rd s = 0$. Thus
  \begin{equation}
    \label{e:ftinfty}
    \xi(t) \sim \frac{1}{\pi} t^{-3/2} \quad \text{ as } t \to \infty\,.
  \end{equation}
  It follows from \cite[Theorem~5]{MR2679702} that
  \begin{equation}
    \label{e:Pxtau1}
    \P_x(\tau_{\Gamma} \in \rd t) = \frac{1}{t} \xi \left( \frac{t}{x}
    \right) \rd t\,.
  \end{equation}
Therefore
  \begin{equation}
    \label{e:Pxtau0}
    \P_x \left( \tau_{\Gamma} > 1 \right) = \int_1^{\infty}
    \frac{1}{t} \xi \left( \frac{t}{x} \right) \rd t =
    \int_{x^{-1}}^{\infty} \frac{\xi(t)}{t} \rd t \sim \frac{2}{\pi}
    x^{1/2} = \frac{2}{\pi} M(x)
  \end{equation}
  as $x \to 0+$ and hence $C_1=C_0$, %for constant
  where $C_1$ is as defined in \eqref{Cjeden}.

  By \cite[(5.8)]{MR2679702} we have that $0 \leq r(x) \leq r(0) = \sin
  (\pi / 8)$ and
  $|\psi(x)| \leq 2$ for $x\geq 0$, where $r(x)$ and $\psi(x)$
  are as 
  defined in \eqref{fr} and \eqref{fpsi}, respectively.
  Further, by \cite[Theorem~2]{MR2679702} the function
  $\psi_{\lambda}(x) =\psi(\lambda x)$ is 
  %the 
  an eigenfunction of the semigroup
  $P^{\Gamma}_t$ acting on $C(\Gamma)$, with the
  % corresponding to 
  eigenvalue
  $\e^{-\lambda t}$. Thus
  \begin{equation}
    \label{e:p1txy}
    p^{\Gamma}_t(x, y) = \frac{2}{\pi} \int_0^{\infty}
    \psi_{\lambda}(x) \psi_{\lambda}(y) \e^{-\lambda t} \rd \lambda\,;
  \end{equation}
  see \cite[(7.4)]{MR2679702}. Note that
  \[ r'(x) = - \frac{\sqrt{2}}{2 \pi} \int_0^{\infty} \frac{t^2}{(1 +
      t^2)^{5/4}} \exp \left( \frac{1}{\pi} \int_0^t \frac{\log s}{1 +
        s^2} \rd s \right) \e^{- t x} \rd t\,. \]
  Since
  \[ - \int_0^t \frac{\log s}{1 + s^2} \rd s = \int_t^{\infty}
    \frac{\log s}{1+s^2} \rd s \]
  is positive for all $t > 0$ and it is regularly varying at $\infty$ of index
  $-1$, the following estimates hold  for $x>0$:
  \[ \int_0^1 \frac{t^2}{(1 + t^2)^{5/4}} \exp \left( \frac{1}{\pi}
      \int_0^t \frac{\log s}{1+s^2} \rd s \right) \e^{-t x} \rd t \leq
    1\,, \]
  \begin{align*}
    & \left| \int_1^{\infty} \frac{t^2}{(1 + t^2)^{5/4}} \exp \left(
      \frac{1}{\pi} \int_0^t \frac{\log s}{1+s^2} \rd s \right) \e^{-t
      x} \rd t - \int_1^{\infty} t^{-1/2} \e^{-t x} \rd t \right| \\
    \leq & \int_1^{\infty} \left| \frac{t^2}{(1 + t^2)^{5/4}} -
           t^{-1/2} \right|  \rd t +
           \int_1^{\infty} t^{-1/2} \left| 1 - \exp \left(
           \frac{1}{\pi} \int_0^t \frac{\log s}{1+s^2} \rd s \right)
           \right| \rd t \\
    \leq & 1 + \frac{1}{\pi} \int_1^{\infty} t^{-1/2} \int_t^{\infty}
           \frac{\log s}{1 + s^2} \rd s \rd t
    < \infty
  \end{align*}
  and
  \[ \left| \int_1^{\infty} t^{-1/2} \e^{-t x} \rd t - \sqrt{\pi}
      x^{-1/2} \right| \leq \int_0^1 t^{-1/2} \e^{-t x} \rd t \leq 2\,. \]
  Thus there exists a constant $c > 0$ such that for $x > 0$,
  \begin{equation}
    \label{e:rxr0}
    \left| r(x) - r(0) - \sqrt{\frac{2}{\pi}} x^{1/2} \right| \leq
    \int_0^x \left| r'(s) + \frac{1}{\sqrt{2 \pi}} s^{-1/2} \right|
    \rd s \leq c x
  \end{equation}
  and
  \begin{equation}
    \label{e:psix0}
    \left| \psi(x) - \sqrt{\frac{2}{\pi}} x^{1/2} \right| \leq \left|
      \sin \left( x + \frac{\pi}{8} \right) - \sin \frac{\pi}{8}
    \right| + \left| r(x) - r(0) - \sqrt{\frac{2}{\pi}} x^{1/2}
    \right| \leq c x\,.
  \end{equation}
  The above inequalities and \eqref{e:p1xy} imply that
  \begin{equation}
    \label{e:p1Gammaa}
    \left| p^{\Gamma}_1(x, y) - \sqrt{\frac{2}{\pi}} x^{1/2}
      \int_0^{\infty} \lambda^{1/2} \psi(\lambda y) \e^{-\lambda} \rd
      \lambda \right| \leq c x \int_0^{\infty} \lambda \psi(\lambda
    y) \e^{- \lambda} \rd \lambda\,.
  \end{equation}
The identity (\ref{e:n1ycauchy}) now follows from \eqref{e:Pxtau0} and \eqref{e:p1Gammaa}.
Since we have \eqref{QSdensity}, the proof is complete.
  % of Theorem \ref{thmcauchy}.
  \end{proof}
{\bf Acknowledgements.} We thank Victor Rivero for discussions on quasi-stationary distributions. We thank Gavin Armstrong for  comments on the paper.

\end{document}